%
\documentclass[runningheads]{llncs}
\usepackage{graphicx}
%
\usepackage{longtable}
\usepackage{wrapfig}
\usepackage{ucs}
\usepackage{dsfont}
\usepackage{mathrsfs}
\usepackage{mathtools}
\usepackage{amsmath}
\usepackage{amsfonts}
\usepackage{amssymb}
\usepackage{soul}
\usepackage[makeroom]{cancel}
\usepackage{bbm}
\usepackage[english]{babel}
\usepackage{ucs}
\usepackage[colorlinks=true,linkcolor=blue,citecolor=blue,urlcolor=orange]{hyperref}
\usepackage{longtable}
\usepackage{wasysym}
\usepackage{verbatim}
\usepackage{multirow}
\usepackage{bussproofs}
\usepackage{setspace}
\usepackage{graphicx}
\usepackage{stmaryrd}
\usepackage{forest}
\forestset{smullyan tableaux/.style={for tree={math content},where n children=1{!1.before computing xy={l=\baselineskip},!1.no edge}{},closed/.style={label=below:$\times$},},}
\usepackage{longtable}
\usepackage[all]{xy}
\usepackage{wrapfig}
\usepackage{xcolor}
\usepackage[colorinlistoftodos,prependcaption,textsize=small]{todonotes}
\usepackage[most]{tcolorbox}
\usepackage{enumitem}
\usepackage{tikz}
\usetikzlibrary{arrows,calc,patterns,positioning,shapes}
\usetikzlibrary{decorations.pathmorphing}
\tikzset{
modal/.style={>=stealth',shorten >=1pt,shorten <=1pt,auto,
node distance=1.5cm,semithick},
world/.style={circle,draw,minimum size=1cm},
point/.style={circle,draw,fill=black,inner sep=0.5mm},
reflexive/.style={->,in=120,out=60,loop,looseness=#1},
reflexive/.default={5},
reflexive point/.style={->,in=135,out=45,loop,looseness=#1},
reflexive point/.default={25},
}
\newcommand{\commentSabine}[1]{}

\newcommand{\Prop}{\mathtt{Prop}}

\newcommand{\BD}{\textsf{BD}}

\newcommand{\Luk}{{\mathchoice{\mbox{\rm\L}}{\mbox{\rm\L}}{\mbox{\rm\scriptsize\L}}{\mbox{\rm\tiny\L}}}}
\newcommand{\infoGsquare}{\mathsf{G}^{2\pm}_{\blacksquare,\blacklozenge}}
\newcommand{\fbinfoGsquare}{{\infoGsquare}_{\mathsf{fb}}}

\newcommand{\infobimodalLsquare}{\mathcal{L}^\neg_{\blacksquare,\blacklozenge}}
\newcommand{\TfbinfoGsquare}{\mathcal{T}\!\left(\fbinfoGsquare\right)}
\newcommand{\Str}{\mathsf{Str}}
\newcommand{\AStr}{\mathsf{AStr}}
\newcommand{\str}{\mathsf{str}}
\newcommand{\real}{\mathsf{rl}}

\newcommand{\coimplies}{\Yleft}
\newcommand{\Gsquare}{\mathsf{G}^2}
\newcommand{\KGsquare}{\mathbf{K}\mathsf{G}^2}
\newcommand{\KG}{\mathfrak{GK}}
\newcommand{\KbiG}{\mathbf{K}\mathsf{biG}}
\newcommand{\fbKGsquare}{\mathbf{K}\mathsf{G}^2_{\mathsf{fb}}}

\newcommand{\pspace}{\mathsf{PSPACE}}
\newtheorem{convention}{Convention}

\usepackage{comment}

\begin{document}

\setlength{\jot}{0pt} 
\setlength{\abovedisplayskip}{2pt}
\setlength{\belowdisplayskip}{2pt}
\setlength{\abovedisplayshortskip}{1pt}
\setlength{\belowdisplayshortskip}{1pt}

\title{Non-standard modalities\\in paraconsistent G\"{o}del logic\thanks{The research of Marta B\'ilkov\'a was supported by the grant 22-01137S of the Czech Science Foundation. The research of Sabine Frittella and Daniil Kozhemiachenko was funded by the grant ANR JCJC 2019, project PRELAP (ANR-19-CE48-0006). This research is part of the MOSAIC project financed by the European Union's Marie Sk\l{}odowska-Curie grant No.~101007627.}}
\titlerunning{Paraconsistent non-standard modalities}
\author{Marta B\'ilkov\'a\inst{1}\orcidID{0000-0002-3490-2083} \and Sabine Frittella\inst{2}\orcidID{0000-0003-4736-8614}\and Daniil Kozhemiachenko\inst{2}\orcidID{0000-0002-1533-8034}}
\authorrunning{B\'ilkov\'a et al.}
\institute{The Czech Academy of Sciences, Institute of Computer Science, Prague\\
\email{bilkova@cs.cas.cz}
\and
INSA Centre Val de Loire, Univ.\ Orl\'{e}ans, LIFO EA 4022, France\\
\email{sabine.frittella@insa-cvl.fr, daniil.kozhemiachenko@insa-cvl.fr}}
\maketitle              
\begin{abstract}
We introduce a paraconsistent expansion of the G\"{o}del logic with a De Morgan negation $\neg$ and modalities $\blacksquare$ and $\blacklozenge$. We dub the logic $\infoGsquare$ and equip it with Kripke semantics on frames with two (possibly fuzzy) relations: $R^+$ and $R^-$ (interpreted as the degree of trust in affirmations and denials by a given source) and valuations $v_1$ and $v_2$ (positive and negative support) ranging over $[0,1]$ and connected via $\neg$.

We motivate the semantics of $\blacksquare\phi$ (resp., $\blacklozenge\phi$) as infima (suprema) of both positive and negative supports of $\phi$ in $R^+$- and $R^-$-accessible states, respectively. We then prove several instructive semantical properties of $\infoGsquare$. Finally, we devise a~tableaux system for $\infoGsquare$ over finitely branching frames and establish the complexity of satisfiability and validity.
\keywords{G\"{o}del logic \and modal logic \and non-standard modalities \and constraint tableaux}
\end{abstract}
\section{Introduction\label{sec:introduction}}
When aggregating information from different sources, two of the simplest strategies are as follows: either one is sceptical and cautious regarding the information they provide thus requiring that they agree, or one is credulous and trusts their sources. In the classical setting, these two strategies can be modelled with $\Box$ and $\lozenge$ modalities defined on Kripke frames where states are sources, the accessibility relation represents references between them, and $w\vDash\phi$ is construed as ‘$w$ says that $\phi$ is true’. However, the sources can contradict themselves or be silent regarding a given question (as opposed to providing a~clear denial). Furthermore, a~source can provide a degree to their confirmation or denial. In all of these cases, classical logic struggles to formalise reasoning with such information.

\vspace{.5em}

\textbf{Paraconsistent reasoning about imperfect data}
In the situation described above, one can use the following setting. A~source $w$ gives a~statement $\phi$ two valuations over $[0,1]$: $v_1$ standing for the degree with which $w$ \emph{asserts} $\phi$ (positive support or support of truth) and $v_2$ for the degree of \emph{denial} (negative support or support of falsity). \emph{Classically}, $v_1(\phi,w)+v_2(\phi,w)=1$; if a source provides \emph{contradictory information}, then $v_1(\phi,w)+v_2(\phi,w)>1$; if the source provides \emph{insufficient information}, then $v_1(\phi,w)+v_2(\phi,w)<1$.

Now, if we account for the nonclassical information provided by the sources, the two aggregations described above can be formalised as follows. For the \emph{sceptical} case, the agent considers \emph{infima of positive and negative supports}. For the \emph{credulous aggregation}, one takes \emph{suprema of positive and negative supports}.

These two aggregation strategies were initially proposed and analysed in~\cite{BilkovaFrittellaMajerNazari2020}. There, however, they were described in a two-layered framework\footnote{We refer our readers to~\cite{BaldiCintulaNoguera2020} and~\cite{BilkovaFrittellaKozhemiachenkoMajer2023IJAR} for an exposition of two-layered modal logics.} which prohibits the nesting of modalities. Furthermore, the Belnap--Dunn logic~\cite{Belnap2019} ($\BD$) that lacks implication was chosen as the propositional fragment. In this paper, we extend that approach to the Kripke semantics to incorporate possible references between the sources and the sources' ability to give modalised statements. Furthermore, we use a paraconsistent expansion $\Gsquare$ from~\cite{BilkovaFrittellaKozhemiachenko2021} of G\"{o}del logic $\mathsf{G}$ as the propositional fragment.

\vspace{.5em}

\textbf{Formalising beliefs in modal expansions of $\mathsf{G}$}
When information is aggregated, the agent can further reason with it. For example, if one knows the degrees of certainty of two given statements, one can add them up, subtract them from one another, or compare them. In many contexts, however, an ordinary person does not represent their certainty in a given statement numerically and thus cannot conduct arithmetical operations with them. What they can do instead, is to \emph{compare} their certainty in one statement vs the other.

Thus, since G\"{o}del logic expresses order and comparisons but not arithmetic operations, it can be used as a propositional fragment of a modal logic formalising beliefs. For example, $\mathbf{K45}$ and $\mathbf{KD45}$ G\"{o}del logics can be used to formalise possibilistic reasoning since they are complete w.r.t.\ normalised and, respectively, non-normalised possibilistic frames~\cite{RodriguezTuytEstevaGodo2022}.

Furthermore, adding coimplication $\coimplies$ or, equivalently, Baaz' Delta operator $\triangle$ (cf.~\cite{Baaz1996} for details), results in bi-G\"{o}del (‘symmetric G\"{o}del’ in the terminology of~\cite{GrigoliaKiseliovaOdisharia2016}) logic that can additionally express strict order.

Modal expansions of $\mathsf{G}$ are well-studied. In particular, the Hilbert~\cite{CaicedoRodriguez2010} and Gentzen~\cite{MetcalfeOlivetti2009,MetcalfeOlivetti2011} formalisations of both $\Box$ and $\lozenge$ fragments of the modal logic $\KG$~\footnote{$\Box$ and $\lozenge$ are not interdefinable in $\KG$.} are known. There are also complete axiomatisations for both fuzzy~\cite{CaicedoRodriguez2015} and crisp~\cite{RodriguezVidal2021} bi-modal G\"{o}del logics. It is known that they and some of their extensions are both decidable and $\pspace$ complete~\cite{CaicedoMetcalfeRodriguezRogger2013,CaicedoMetcalfeRodriguezRogger2017,DieguezFernandez-Duque2023} even though they lack finite model property.

Furthermore, it is known that the addition of $\coimplies$ or $\triangle$ as well as of a paraconsistent negation $\neg$ that swaps the supports of truth and falsity does not increase the complexity. Namely, satisfiability of $\KbiG$ and $\mathsf{GTL}$ (modal and temporal bi-G\"{o}del logics, respectively) (cf.~\cite{BilkovaFrittellaKozhemiachenko2022IJCAR,BilkovaFrittellaKozhemiachenko2022IGPLarxiv} for the former and~\cite{AguileraDieguezFernandez-DuqueMcLean2022} for the latter) as well as that of $\KGsquare$ (expansion of crisp $\KG$ with $\neg$\footnote{Note that in the presence of $\neg$, $\phi\coimplies\phi'$ is definable as $\neg(\neg\phi'\rightarrow\neg\phi)$.}) are in $\pspace$.

\vspace{.5em}

\textbf{This paper}
In this paper, we consider an expansion of $\Gsquare$ with modalities $\blacksquare$ and $\blacklozenge$ that stand for the cautious and credulous aggregation strategies. We equip them with Kripke semantics, construct a sound and complete tableaux calculus, and explore their semantical and computational properties. Our inspiration comes from two sources: modal expansions of G\"{o}del logics that we discussed above and modal expansions of Belnap--Dunn logic with Kripke semantics on bi-valued frames as studied by Priest~\cite{Priest2008FromIftoIs,Priest2008}, Odintsov and Wansing~\cite{OdintsovWansing2010,OdintsovWansing2017}, and others (cf.~\cite{Drobyshevich2020} and references therein to related work in the field). In a sense, $\infoGsquare$ can be thought of as a hybrid between modal logics over $\BD$

The remaining text is organised as follows. In Section~\ref{sec:language}, we define the language and semantics of $\infoGsquare$. Then, in Section~\ref{sec:definability} we show how to define several important frame classes, in particular, finitely branching frames. We also argue for the use of $\fbinfoGsquare$ ($\infoGsquare$ over finitely branching frames) for the representation of agents' beliefs. In Section~\ref{sec:tableaux} we present a sound and complete tableaux calculus for $\fbinfoGsquare$ and in Section~\ref{sec:complexity}, we use it to show that $\fbinfoGsquare$ validity and satisfiability are $\pspace$ complete. Finally, in Section~\ref{sec:conclusion}, we wrap up the paper and provide a roadmap to future work.
\section{Logical preliminaries\label{sec:language}}
In this section, we provide semantics of $\infoGsquare$ over both fuzzy and crisp frames. To make the presentation more approachable, we begin with bi-G\"{o}del algebras.
\begin{definition}\label{def:bi-G_algebra}
The bi-G\"{o}del algebra $[0,1]_{\mathsf{G}}=\langle[0,1],0,1,\wedge_\mathsf{G},\vee_\mathsf{G},\rightarrow_{\mathsf{G}},\coimplies\rangle$ is defined as follows: for all $a,b\in[0,1]$, we have $a\wedge_\mathsf{G}b=\min(a,b)$, $a\vee_\mathsf{G}b=\max(a,b)$. The remaining operations are defined below:
\begin{align*}
a\rightarrow_\mathsf{G}b&=
\begin{cases}
1,\text{ if }a\leq b\\
b\text{ else}
\end{cases}
&
a\coimplies_\mathsf{G}b&=
\begin{cases}
0,\text{ if }a\leq b\\
a\text{ else}
\end{cases}
\end{align*}
\end{definition}

We are now ready to define the language and semantics of $\infoGsquare$.
\begin{definition}\label{def:semantics}
We fix a countable set of propositional variables $\Prop$ and define the language via the following grammar.
\[\infobimodalLsquare\ni\phi\coloneqq p\in\Prop\mid\neg\phi\mid(\phi\!\wedge\!\phi)\mid(\phi\!\rightarrow\!\phi)\mid\blacksquare\phi\mid\blacklozenge\phi\]
Constants $\mathbf{0}$ and $\mathbf{1}$, disjunction $\vee$, and coimplication $\coimplies$ as well as G\"{o}del negation ${\sim}$ can be defined as expected:
\begin{align*}
\mathbf{1}&\coloneqq p\!\rightarrow\!p&\mathbf{0}&\coloneqq\neg\mathbf{1}&{\sim}\phi&\coloneqq\phi\!\rightarrow\!\mathbf{0}&\phi\!\vee\!\phi'&\coloneqq\neg(\neg\phi\!\wedge\!\neg\phi')&\phi\!\coimplies\!\phi'&\coloneqq\neg(\neg\phi'\!\!\rightarrow\!\!\neg\phi)
\end{align*}

A \emph{fuzzy bi-relational frame is a tuple} $\mathfrak{F}=\langle W,R^+,R^-\rangle$ with $W\neq\varnothing$ and $R^+,R^-:W\times W\rightarrow[0,1]$. In a \emph{crisp frame}, $R^+,R^-:W\times W\rightarrow\{0,1\}$. A~\emph{model} is a~tuple $\mathfrak{M}=\langle W,R^+,R^-,v_1,v_2\rangle$ with $\langle W,R^+,R^-\rangle$ being a frame and $v_1,v_2:\Prop\rightarrow[0,1]$ that are extended to the complex formulas as follows.
\begin{center}
\begin{tabular}{rclrcl}
$v_1(\neg\phi,w)$&$=$&$v_2(\phi,w)$&$v_2(\neg\phi,w)$&$=$&$v_1(\phi,w)$\\
$v_1(\phi\wedge\phi',w)$&$=$&$v_1(\phi,w)\wedge_\mathsf{G}v_1(\phi',w)$&$v_2(\phi\wedge\phi',w)$&$=$&$v_2(\phi,w)\vee_\mathsf{G}v_2(\phi',w)$\\
$v_1(\phi\rightarrow\phi',w)$&$=$&$v_1(\phi,w)\!\rightarrow_\mathsf{G}\!v_1(\phi',w)$&$v_2(\phi\rightarrow\phi',w)$&$=$&$v_2(\phi',w)\coimplies_\mathsf{G}v_2(\phi,w)$\\
\end{tabular}
\end{center}
\begin{center}
\begin{tabular}{rclrcl}
$v_1(\blacksquare\phi,w)$&$=$&$\inf\limits_{w'\!\in\!W}\!\{wR^+w'\!\!\rightarrow_\mathsf{G}\!\!v_1(\phi,w')\}$
&
$v_2(\blacksquare\phi,w)$&$=$&$\inf\limits_{w'\!\in\!W}\!\{wR^-w'\!\!\rightarrow_\mathsf{G}\!\!v_2(\phi,w')\}$\\
$v_1(\blacklozenge\phi,w)$&$=$&$\sup\limits_{w'\!\in\!W}\!\{wR^+w'\!\wedge_\mathsf{G}\!v_1(\phi,w')\}$
&
$v_2(\blacklozenge\phi,w)$&$=$&$\sup\limits_{w'\!\in\!W}\!\{wR^-w'\!\wedge_\mathsf{G}\!v_2(\phi,w')\}$
\end{tabular}
\end{center}
We will further write $v(\phi,w)=(x,y)$ to designate that $v_1(\phi,w)=x$ and $v_2(\phi,w)=y$. Moreover, we set $S(w)=\{w':wSw'>0\}$.

We say that $\phi$ is \emph{$v_1$-valid on $\mathfrak{F}$} ($\mathfrak{F}\models^+\phi$) iff for every model $\mathfrak{M}$ on $\mathfrak{F}$ and every $w\in\mathfrak{M}$, it holds that $v_1(\phi,w)=1$. $\phi$ is \emph{$v_2$-valid on $\mathfrak{F}$} ($\mathfrak{F}\models^-\phi$) iff for every model $\mathfrak{M}$ on $\mathfrak{F}$ and every $w\in\mathfrak{M}$, it holds that $v_2(\phi,w)=0$. $\phi$ is \emph{strongly valid on $\mathfrak{F}$} ($\mathfrak{F}\models\phi$) iff it is $v_1$ and $v_2$-valid.

$\phi$ is $v_1$ (resp., $v_2$, strongly) \emph{$\infoGsquare$ valid} iff it is $v_1$ (resp., $v_2$, strongly) valid on every frame. We will further use $\infoGsquare$ to designate the set of all $\infobimodalLsquare$ formulas \emph{strongly valid} on every frame.
\end{definition}

Observe in the definition above that the semantical conditions governing the support of truth of $\infoGsquare$ connectives (except for $\neg$) coincide with the semantics of $\KbiG$ (cf.~\cite{BilkovaFrittellaKozhemiachenko2022IJCAR} for the detailed semantics of the latter).

\begin{example}\label{example:restaurant}
A tourist ($t$) wants to go to a restaurant and asks their two friends ($f_1$ and $f_2$) to describe their impressions regarding the politeness of the staff ($s$) and the quality of the desserts ($d$). Of course, the friends' opinions are not always internally consistent, nor is it always the case that one or the other even noticed whether the staff was polite or was eating desserts. Furthermore, $t$ trusts their friends to different degrees when it comes to their positive and negative opinions. The situation is depicted in Fig.~\ref{fig:restaurant}.

The first friend says that half of the staff was really nice but the other half is unwelcoming and rude and that the desserts (except for the tiramisu and souffl\'{e}) are tasty. The second friend, unfortunately, did not have the desserts at all. Furthermore, even though, they praised the staff, they also said that the manager was quite obnoxious.

The tourist now makes up their mind. If they are sceptical w.r.t.\ $s$ and $d$, they look for \emph{trusted rejections}\footnote{We differentiate between a~\emph{rejection} which we treat as \emph{lack of support} and a~\emph{denial, disproof, refutation, counterexample}, etc.\ which we interpret as the \emph{negative support}.} of both positive and negative supports of $s$ and $d$. Thus $t$ uses the values of $R^+$ and $R^-$ as thresholds above which the information provided by the source does not count as a trusted enough rejection. In our case, we have $v(\blacksquare s,t)=(0.5,0.5)$ and $v(\blacksquare d,t)=(0,0)$. On the other hand, if $t$ is credulous, they look for \emph{trusted} confirmations \emph{of both positive and negative supports} and use $R^+$ and $R^-$ as thresholds up to which they accept the information provided by the source. Thus, we have $v(\blacklozenge s,t)=(0.7,0.4)$ and $v(\blacklozenge d,t)=(0.7,0.3)$.
\end{example}

\begin{figure}
\[\xymatrix{f_1:\txt{$s=(0.5,0.5)$\\$d=(0.7,0.3)$}~&&~t~\ar[rr]^(.3){(0.7,0.2)}\ar[ll]_(.3){(0.8,0.9)}&&~f_2:\txt{$s=(1,0.4)$\\$d=(0,0)$}}\]
\caption{$(x,y)$ stands for $wR^+w'=x,wR^-w'=y$. $R^+$ (resp., $R^-$) is interpreted as the tourist's threshold of trust in positive (negative) statements by the friends.}
\label{fig:restaurant}
\end{figure}
\noindent
\begin{minipage}{0.75\linewidth}
~\quad More formally, note that we can combine $v_1$ and $v_2$ into a~single valuation (denoted with $\bullet$) on the following bi-lattice on the right. Now, if we let $\sqcap$ and $\sqcup$ be the meet and join w.r.t.\ the rightward order, it is clear that $\blacksquare$ can be interpreted as an infinitary $\sqcap$ and $\blacklozenge$ as an infinitary $\sqcup$ across the accessible states, respectively.
\end{minipage}
\begin{minipage}{0.25\linewidth}
\resizebox{1\linewidth}{!}{
\begin{tikzpicture}[>=stealth,relative]
\node (U1) at (0,-2) {$(0,1)$};
\node (U2) at (-2,0) {$(0,0)$};
\node (U3) at (2,0) {$(1,1)$};
\node (U4) at (0,2) {$(1,0) $};
\node (U5) at (0.2,0.6) {$\bullet$};
\node (U6) at (0.2,0.4) {$(x,y)$};
\path[-,draw] (U1) to (U2);
\path[-,draw] (U1) to (U3);
\path[-,draw] (U2) to (U4);
\path[-,draw] (U3) to (U4);
\end{tikzpicture}}
\end{minipage}

From here, it is expected that $\blacksquare$ and $\blacklozenge$ are not normal in the following sense: $\blacksquare(p\wedge q)\leftrightarrow(\blacksquare p\wedge\blacksquare q)$, $\blacksquare\mathbf{1}$, $\blacklozenge(p\vee q)\leftrightarrow(\blacklozenge p\vee\blacklozenge q)$, and $\blacklozenge\mathbf{0}\leftrightarrow\mathbf{0}$ are not valid.

Finally, we have called $\infoGsquare$ ‘paraconsistent’. In this paper, we consider the logic to be a set of valid formulas. It is clear that the explosion principle for $\rightarrow$ --- $(p\wedge\neg p)\rightarrow q$ --- is not valid. Furthermore, in contrast to $\mathbf{K}$, it is possible to believe in a~contradiction without \emph{believing in every statement}: $\blacklozenge(p\wedge\neg p)\rightarrow\blacklozenge q$ and $\blacksquare(p\wedge\neg p)\rightarrow\blacksquare q$ are not valid.

We end the section by proving that $\blacklozenge$ and $\blacksquare$ \emph{are not interdefinable}.
\begin{theorem}\label{theorem:nondefinability}
$\blacksquare$ and $\blacklozenge$ are not interdefinable.
\end{theorem}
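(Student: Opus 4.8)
The plan is to refute both definability claims by a single method in two mirror‑image forms. For the assertion that $\blacksquare$ is not definable from $\neg,\wedge,\rightarrow,\blacklozenge$, I will exhibit two pointed models that assign the same value to \emph{every} formula containing no occurrence of $\blacksquare$ but different values to $\blacksquare p$; since a $\blacksquare$-free definition of $\blacksquare p$ would have to agree with $\blacksquare p$ on every pointed model, this is impossible. The non-definability of $\blacklozenge$ from $\neg,\wedge,\rightarrow,\blacksquare$ follows from the symmetric construction, with the pair $(\inf,\rightarrow_\mathsf{G})$ behind $\blacksquare$ and the pair $(\sup,\wedge_\mathsf{G})$ behind $\blacklozenge$ exchanging roles. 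Both countermodels can be taken over crisp frames.

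For the first direction, let $\mathfrak{M}$ be the model on $\{w,a\}$ with $wR^+a=1$, $R^-=\varnothing$, $v(p,a)=(\tfrac12,0)$, and $v(q,x)=(0,0)$ for every other atom $q$ and every state $x$ (so $a$ has no successors); let $\mathfrak{M}'$ be the model on $\{w',a,c\}$ with $w'R^+a=w'R^+c=1$, $R^-=\varnothing$, $v(p,a)=(\tfrac12,0)$, $v(p,c)=(\tfrac14,0)$, the remaining atoms again $(0,0)$. Straight from the truth conditions, $v_1(\blacksquare p,w)=1\rightarrow_\mathsf{G}\tfrac12=\tfrac12$ while $v_1(\blacksquare p,w')=\min\{1\rightarrow_\mathsf{G}\tfrac12,\,1\rightarrow_\mathsf{G}\tfrac14\}=\tfrac14$, so $w$ and $w'$ disagree on $\blacksquare p$. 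It remains to show $v(\psi,w)=v(\psi,w')$ for every $\blacksquare$-free $\psi$, by induction on $\psi$: the atomic case holds by construction and the cases of $\neg,\wedge,\rightarrow$ are immediate, so the one real case is $\psi=\blacklozenge\phi$. Since $a$ and $c$ are successor-less and $\phi$ is $\blacksquare$-free, every $\blacklozenge$-subformula of $\phi$ has value $(0,0)$ at $a$ and at $c$; hence $v_1(\phi,\cdot)$ at such a state is a fixed function $H_\phi$ of $v_1(p,\cdot)$, obtainable from $t$, $0$ and $1$ by repeated application of $\min,\max,\rightarrow_\mathsf{G},\coimplies_\mathsf{G}$ (the $\coimplies_\mathsf{G}$ arising from the $v_2$-clause of $\rightarrow$ composed with $\neg$). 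Also $v_2(\blacklozenge\phi,w)=v_2(\blacklozenge\phi,w')=0$ because $R^-=\varnothing$. Thus $v(\blacklozenge\phi,w)=(H_\phi(\tfrac12),0)$ and $v(\blacklozenge\phi,w')=(\max\{H_\phi(\tfrac12),H_\phi(\tfrac14)\},0)$, and these coincide provided $H_\phi(\tfrac14)\le H_\phi(\tfrac12)$.

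The whole argument thus rests on one lemma, which I expect to be the only nontrivial point: \emph{every unary function on $[0,1]$ obtainable from $t$, $0$ and $1$ by repeated application of $\min$, $\max$, $\rightarrow_\mathsf{G}$ and $\coimplies_\mathsf{G}$ is non-decreasing on the open interval $(0,1)$.} I would prove it by induction on the term, establishing the stronger statement that the restriction of any such function to $(0,1)$ is one of the three functions $0$, $1$, $t\mapsto t$: this three-element set contains the restrictions of the generators, is obviously closed under $\min$ and $\max$, and a routine case analysis (nine cases for each of $\rightarrow_\mathsf{G}$ and $\coimplies_\mathsf{G}$) shows it is closed under those two operations as well — the underlying reason being that the only available constants $0,1$ cannot create a break-point strictly inside $(0,1)$. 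Since $\tfrac14,\tfrac12\in(0,1)$, this gives $H_\phi(\tfrac14)\le H_\phi(\tfrac12)$ and closes the first direction.

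For the claim that $\blacklozenge$ is not definable from $\neg,\wedge,\rightarrow,\blacksquare$, I would keep $\mathfrak{M}$ and replace $c$ by a successor-less state $c'$ with $v(p,c')=(\tfrac34,0)$. Then $v_1(\blacklozenge p,w)=1\wedge_\mathsf{G}\tfrac12=\tfrac12$ whereas $v_1(\blacklozenge p,w')=\max\{\tfrac12,\tfrac34\}=\tfrac34$, while $v_1(\blacksquare p,w)=\tfrac12=\min\{\tfrac12,\tfrac34\}=v_1(\blacksquare p,w')$, so $\blacksquare p$ no longer separates the points. The inductive proof that all $\blacklozenge$-free formulas agree at $w$ and $w'$ is the mirror image of the previous one: the key case $\psi=\blacksquare\phi$ now yields $v_1(\blacksquare\phi,w)=1\rightarrow_\mathsf{G}v_1(\phi,a)=H_\phi(\tfrac12)$ and $v_1(\blacksquare\phi,w')=\min\{H_\phi(\tfrac12),H_\phi(\tfrac34)\}=H_\phi(\tfrac12)$ by the same monotonicity lemma (at the successor-less states the $\blacksquare$-subformulas evaluate to $(1,1)$, which only feeds the constants $0,1$ into $H_\phi$, so the lemma still applies), and $v_2(\blacksquare\phi,w)=v_2(\blacksquare\phi,w')=1$ since $R^-=\varnothing$. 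Apart from the monotonicity lemma, the only step that needs a little care is the book-keeping in the inductive clause — verifying that the value of an arbitrary $\blacksquare$-free (resp.\ $\blacklozenge$-free) formula at a successor-less state really is a single unary function of $v_1(p,\cdot)$ of the stated form; everything else is direct computation from Definition~\ref{def:semantics}.
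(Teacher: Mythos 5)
Your proposal is correct, but it takes a genuinely different route from the paper. The paper uses a \emph{single} pointed model (a root $w_0$ with two dead-end successors carrying $p=(\tfrac{2}{3},\tfrac{1}{2})$ and $(\tfrac{1}{3},\tfrac{1}{4})$, with $R^+=R^-$) and argues via attainable \emph{value sets}: it computes finite sets $X$ and $Y$ of values that $\blacklozenge$-free, resp.\ $\blacksquare$-free, formulas can take at $w_0$, checks these sets are closed under the propositional operations, and observes that $v(\blacklozenge p,w_0)\notin X$ and $v(\blacksquare p,w_0)\notin Y$ — so one model kills both directions at once. You instead run the classical \emph{indistinguishability} argument: for each direction a pair of pointed models agreeing on the whole $\blacksquare$-free (resp.\ $\blacklozenge$-free) fragment while disagreeing on $\blacksquare p$ (resp.\ $\blacklozenge p$), with the agreement reduced to your monotonicity lemma that every unary function generated from $t,0,1$ by $\min,\max,\rightarrow_\mathsf{G},\coimplies_\mathsf{G}$ restricts on $(0,1)$ to one of $0$, $1$, $t\mapsto t$. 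That lemma is true and your closure-by-cases proof of it goes through (the restriction-to-$(0,1)$ trick correctly avoids the endpoint breakpoints of ${\sim}$-like functions), and your computations of $\blacksquare p$, $\blacklozenge p$ and of the modal cases at the dead-end states are right; note that both approaches ultimately rest on the same closure phenomenon, your $\{0,1,\mathrm{id}\}$ clone playing the role of the paper's finite sets $X,Y$. Your version is more modular (the lemma is reusable and explains \emph{why} the value sets close up), at the cost of needing two pairs of models and slightly more bookkeeping; the paper's is more economical. Two small points to tidy up: you never fix $v(p,w)$ and $v(p,w')$ — just stipulate they are equal, e.g.\ $(0,0)$, so the atomic case of the induction holds; and in the leaf-state bookkeeping you should say explicitly that you track the \emph{pair} $(v_1,v_2)$ of unary functions (since $\neg$ swaps components and the $v_2$-clauses use $\vee_\mathsf{G}$ and $\coimplies_\mathsf{G}$), both components lying in the clone your lemma describes — you gesture at this, and it is indeed all that is needed.
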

\begin{proof}
Denote with $\mathcal{L}_\blacksquare$ and $\mathcal{L}_\blacklozenge$ the $\blacklozenge$- and $\blacksquare$-free fragments of $\infobimodalLsquare$. We build a pointed model $\langle\mathfrak{M},w\rangle$ s.t.\ there is no $\blacklozenge$-free formula that has the same value at $w$ as $\blacksquare p$ (and vice versa). Consider Fig.~\ref{fig:nondefinability}.

\begin{figure}
\[\xymatrix{w_1:p=\left(\frac{2}{3},\!\frac{1}{2}\right)&\ar[l]\ar[r]w_0:p=(1,\!0)&w_2:p=\left(\frac{1}{3},\!\frac{1}{4}\right)}\]
\caption{All variables have the same values in all states exemplified by $p$. $R^+=R^-$, $v(\blacksquare p,w_0)=\left(\frac{1}{3},\frac{1}{4}\right)$, $v(\blacklozenge p,w_0)=\left(\frac{2}{3},\frac{1}{2}\right)$.}
\label{fig:nondefinability}
\end{figure}

One can check by induction that if $\phi\in\infobimodalLsquare$, then
\begin{align*}
v(\phi,w_1)&\in\left\{(0;1),\left(\frac{1}{2};\frac{2}{3}\right),\left(\frac{2}{3};\frac{1}{2}\right),(0;0),(1;1),(1;0)\right\}
\end{align*}
\begin{align*}
v(\phi,w_2)&\in\left\{(0;1),\left(\frac{1}{4};\frac{1}{3}\right),\left(\frac{1}{3};\frac{1}{4}\right),(0;0),(1;1),(1;0)\right\}
\end{align*}
Moreover, on the single-point irreflexive frame whose only state is $u$, it holds for every $\phi(p)\in\infobimodalLsquare$, $v(\phi,u)\in\{v(p,u),v(\neg p,u),(1,0),(1,1),(0,0),(0,1)\}$.

Thus, for every $\blacklozenge$-free $\chi$ and every $\blacksquare$-free $\psi$ it holds that
\begin{align*}
v(\blacksquare\chi,w_0)&\in\left\{(0;1),\left(\frac{1}{3};\frac{1}{4}\right),\left(\frac{1}{4};\frac{1}{3}\right),(0;0),(1;1),(1;0)\right\}=X
\end{align*}
\begin{align*}
v(\blacklozenge\psi,w_0)&\in\left\{(0;1),\left(\frac{1}{2};\frac{2}{3}\right),\left(\frac{2}{3};\frac{1}{2}\right),(0;0),(1;1),(1;0)\right\}=Y
\end{align*}

Since $X$ and $Y$ are closed w.r.t.\ propositional operations, it is now easy to check by induction that for every $\chi'\in\mathcal{L}_\blacksquare$ and $\psi'\in\mathcal{L}_\blacklozenge$, $v(\chi',w_0)\in X$ and $v(\psi',w_0)\in Y$.
\end{proof}
\section{Frame definability\label{sec:definability}}
In this section, we explore some classes of frames that can be defined in $\infobimodalLsquare$. However, since $\blacksquare$ and $\blacklozenge$ are non-normal and since we have two independent relations on frames, we expand the traditional notion of modal definability.
\begin{definition}\label{def:+-definability}
\begin{enumerate}[noitemsep,topsep=2pt]
\item[]
\item $\phi$ \emph{positively defines} a class of frames $\mathbb{F}$ iff for every $\mathfrak{F}$, it holds that \emph{$\mathfrak{F}\models^+\phi$ iff $\mathfrak{F}\in\mathbb{F}$}.
\item $\phi$ \emph{negatively defines} a class of frames $\mathbb{F}$ iff for every $\mathfrak{F}$, every $w\in\mathfrak{F}$, it holds that \emph{$\mathfrak{F}\models^-\phi$ iff $\mathfrak{F}\in\mathbb{F}$}.
\item $\phi$ \emph{(strongly) defines} a class of frames $\mathbb{F}$ iff for every $\mathfrak{F}$, it holds that $\mathfrak{F}\in\mathbb{F}$ iff $\mathfrak{F}\models\phi$.
\end{enumerate}
\end{definition}

With the help of the above definition, we can show that every class of frames definable in $\KbiG$ is \emph{positively definable} in $\infoGsquare$.
\begin{definition}\label{def:+-framecounterparts}
Let $\mathfrak{F}=\langle W,S\rangle$ be a (fuzzy or crisp) frame.
\begin{enumerate}[noitemsep,topsep=2pt]
\item An \emph{$R^+$-counterpart of $\mathfrak{F}$} is any bi-relational frame $\mathfrak{F}^+=\langle W,S,R^-\rangle$.
\item An \emph{$R^-$-counterpart of $\mathfrak{F}$} is any bi-relational frame $\mathfrak{F}^+=\langle W,R^+,S\rangle$.
\end{enumerate}
\end{definition}
\begin{convention}\label{conv:blackcounterpart}
Let $\phi$ be over $\{\wedge,\vee,\rightarrow,\coimplies,\Box,\lozenge\}$.
\begin{enumerate}[noitemsep,topsep=2pt]
\item We denote with $\phi^{+\bullet}$ the formula obtained from $\phi$ by replacing all $\Box$'s and $\lozenge$'s with $\blacksquare$'s and $\blacklozenge$'s.
\item We denote with $\phi^{-\bullet}$ the formula obtained from $\phi$ by replacing all $\Box$'s and $\lozenge$'s with $\neg\blacksquare\neg$'s and $\neg\blacklozenge\neg$'s.
\end{enumerate}
\end{convention}
\begin{theorem}\label{theorem:blackcounterparts}
Let $\mathfrak{F}=\langle W,S\rangle$ and let $\mathfrak{F}^+$ and $\mathfrak{F}^-$ be its $R^+$ and $R^-$ counterparts. Then, for any $\phi$ be over $\{\wedge,\vee,\rightarrow,\coimplies,\Box,\lozenge\}$, it holds that
\[\mathfrak{F}\models_{\KbiG}\phi\quad\text{iff}\quad\mathfrak{F}^+\models^+\phi^{+\bullet}\quad\text{iff}\quad\mathfrak{F}^-\models^+\phi^{-\bullet}\]
\end{theorem}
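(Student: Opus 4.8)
The plan is to reduce the statement to a single transfer lemma linking the positive support $v_1$ of an $\infoGsquare$-model to a $\KbiG$-model. First I would record the derived $v_1$-clauses of the connectives that are defined via $\neg$: unfolding the abbreviations of Definition~\ref{def:semantics} and using the clauses for $\neg,\wedge,\rightarrow$ gives $v_1(\phi\vee\phi',w)=v_1(\phi,w)\vee_\mathsf{G}v_1(\phi',w)$ and $v_1(\phi\coimplies\phi',w)=v_1(\phi,w)\coimplies_\mathsf{G}v_1(\phi',w)$, so that, together with the observation made right after Definition~\ref{def:semantics}, the $v_1$-conditions of all of $\wedge,\vee,\rightarrow,\coimplies$ literally coincide with the $\KbiG$-conditions. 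For the two modal translations I would compute, in an arbitrary $\infoGsquare$-model, that $v_1(\blacksquare\psi,w)=\inf_{w'}\{wR^+w'\rightarrow_\mathsf{G}v_1(\psi,w')\}$ and $v_1(\blacklozenge\psi,w)=\sup_{w'}\{wR^+w'\wedge_\mathsf{G}v_1(\psi,w')\}$ on the one hand, while $v_1(\neg\blacksquare\neg\psi,w)=v_2(\blacksquare\neg\psi,w)=\inf_{w'}\{wR^-w'\rightarrow_\mathsf{G}v_1(\psi,w')\}$ and $v_1(\neg\blacklozenge\neg\psi,w)=\sup_{w'}\{wR^-w'\wedge_\mathsf{G}v_1(\psi,w')\}$ on the other; so $\blacksquare,\blacklozenge$ act on $v_1$ like $\KbiG$'s $\Box,\lozenge$ along $R^+$, and $\neg\blacksquare\neg,\neg\blacklozenge\neg$ act on $v_1$ like $\Box,\lozenge$ along $R^-$.

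Next I would prove, by induction on the $\KbiG$-formula $\phi$, the transfer lemma: for every $\infoGsquare$-model $\mathfrak{M}=\langle W,R^+,R^-,v_1,v_2\rangle$ and every $w$,
\[v_1(\phi^{+\bullet},w)=e^{R^+}(\phi,w)\qquad\text{and}\qquad v_1(\phi^{-\bullet},w)=e^{R^-}(\phi,w),\]
where $e^{S'}$ denotes the valuation of the $\KbiG$-model $\langle W,S',v_1\rangle$, the values of $v_1$ on $\Prop$ playing the role of the $\KbiG$-assignment. The two equalities are established independently of one another, and each case is immediate from the clause coincidences just listed (modal steps using the first pair, resp.\ the second pair, of the above computations). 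A byproduct worth noting is that $v_1(\phi^{+\bullet},w)$ depends only on $R^+$ and on the restriction of $v_1$ to $\Prop$, and never on $v_2$ or $R^-$; dually for $\phi^{-\bullet}$. This is precisely what makes the particular choice of counterpart immaterial.

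Finally I would read off the theorem. Fix $\mathfrak{F}=\langle W,S\rangle$ and let $\mathfrak{F}^+=\langle W,S,R^-\rangle$ be an $R^+$-counterpart, so that $S$ is the $R^+$ of $\mathfrak{F}^+$. The functions $v_1$ occurring in $\infoGsquare$-models on $\mathfrak{F}^+$ are exactly the $\KbiG$-assignments on $\langle W,S\rangle$ (and any $v_2$ will do), so the first equality of the transfer lemma gives: $\mathfrak{F}^+\models^+\phi^{+\bullet}$ iff $v_1(\phi^{+\bullet},w)=1$ for every model on $\mathfrak{F}^+$ and every $w$ iff $e^{S}(\phi,w)=1$ for every $\KbiG$-model on $\langle W,S\rangle$ and every $w$ iff $\mathfrak{F}\models_{\KbiG}\phi$. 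The argument for the $R^-$-counterpart $\mathfrak{F}^-=\langle W,R^+,S\rangle$ (where now $S$ is $R^-$) is word-for-word the same, using the second equality instead. Chaining the two equivalences yields the displayed three-way ``iff''. I do not expect a genuine obstacle; the only point requiring care is the bookkeeping for $\vee$ and $\coimplies$ under the translations --- making sure the $\neg$'s they unfold into cancel as intended --- and, relatedly, keeping straight that $\phi^{-\bullet}$ probes $R^-$ rather than $R^+$.
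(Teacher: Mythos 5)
Your proposal is correct and follows essentially the same route as the paper: an induction establishing that $v_1$ of the translated formula equals the $\KbiG$-value of $\phi$ (the paper states this as the lemma $\mathbf{v}(\phi,w)=v_1(\phi^{-\bullet},w)$ with $v_2$ arbitrary, and its displayed $\Box$-case is exactly your computation of $v_1(\neg\blacksquare\neg\,\cdot\,,w)$ along $R^-$), after observing that the $v_1$-clauses of the connectives coincide with the $\KbiG$-clauses. You merely spell out more explicitly the derived clauses for $\vee$ and $\coimplies$ and the $^{+\bullet}$ half, which the paper dismisses as immediate from that coincidence.
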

\begin{proof}
Since the semantics of $\KbiG$ connectives is identical to $v_1$ conditions of Definition~\ref{def:semantics}, we only prove that $\mathfrak{F}\models\phi$ iff $\mathfrak{F}^-\models^+\phi^{-\bullet}$. It suffices to prove by induction the following statement.
\begin{center}
\emph{Let $\mathbf{v}$ be a $\KbiG$ valuation on $\mathfrak{F}$, $\mathbf{v}(p,w)=v_1(p,w)$ for every $w\in\mathfrak{F}$, and $v_2$ be arbitrary. Then $\mathbf{v}(\phi,w)=v_1(\phi^{-\bullet},w)$ for every $\phi$.}
\end{center}
The case of $\phi=p$ holds by Convention~\ref{conv:blackcounterpart}, the cases of propositional connectives are straightforward. Consider $\phi=\Box\chi$. We have that $\phi^{-\bullet}=\neg\blacksquare\neg(\chi^{-\bullet})$ and thus
\begin{align*}
v_1(\neg\blacksquare\neg(\chi^{-\bullet}),w)&=v_2(\blacksquare\neg(\chi^{-\bullet}),w)\\
&=\inf\limits_{w'\in W}\{wSw'\rightarrow_\mathsf{G}v_2(\neg(\chi^{-\bullet}))\}\\
&=\inf\limits_{w'\in W}\{wSw'\rightarrow_\mathsf{G}v_1(\chi^{-\bullet})\}\\
&=\inf\limits_{w'\in W}\{wSw'\rightarrow_\mathsf{G}\mathbf{v}(\chi)\}\tag{by IH}\\
&=\mathbf{v}(\Box\chi,w)
\end{align*}
\end{proof}

The above theorem allows us to \emph{positively} define in $\infoGsquare$ all classes of frames that are definable in $\KbiG$. In particular, all $\mathbf{K}$-definable frames are positively definable. Moreover, it follows that $\infoGsquare$ (as $\KG$ and $\KbiG$) lacks the finite model property: ${\sim}\Box(p\vee{\sim}p)$ is false on every finite frame, and thus, ${\sim}\blacksquare(p\vee{\sim}p)$ is too. On the other hand, there are infinite models satisfying this formula as shown below ($R^+$ and $R^-$ are crisp).
\[\xymatrix{w_1:p=\left(\frac{1}{2},0\right)&\ldots&w_n:p=\left(\frac{1}{n+1},0\right)&\ldots\\&w_0:p=(0,0)\ar[ul]|{+}\ar[ur]|{+}\ar[urr]|{+}\ar@(d,l)|{-}&&}\]

Furthermore, Theorem~\ref{theorem:blackcounterparts} gives us a degree of flexibility. For example, one can check that $\neg\blacksquare\neg(p\vee q)\rightarrow(\neg\blacksquare\neg p\vee\neg\blacklozenge\neg q)$ positively defines frames with crisp $R^-$ but not necessarily crisp $R^+$. This models a situation when an agent \emph{completely (dis)believes} in denials given by their sources while may have some degree of trust between $0$ and $1$ when the sources assert something. Let us return to Example~\ref{example:restaurant}.
\begin{example}\label{example:illnesscontinued}
Assume that the tourist \emph{completely trusts} the negative (but not positive) opinions of their friends. Thus, instead of Fig.~\ref{fig:restaurant}, we have the following model.
\[\xymatrix{f_1:\txt{$s=(0.5,0.5)$\\$d=(0.7,0.3)$}~&&~t~\ar[rr]^(.3){(0.7,1)}\ar[ll]_(.3){(0.8,1)}&&~f_2:\txt{$s=(1,0.4)$\\$d=(0,0)$}}\]

The new values for the cautious and credulous aggregation are as follows: $v(\blacksquare s,t)=(0.5,0.4)$, $v(\blacksquare d,t)=(0,0)$, $v(\blacklozenge s,t)=(0.7,0.5)$, and $v(\blacklozenge d,t)=(0.7,0.3)$.
\end{example}

Furthermore, the agent can trust the sources to the same degree no matter whether they confirm or deny statements. This can be modelled with \emph{mono-relational} frames where $R^+\!=\!R^-$. We show that they are \emph{strongly definable}.
\begin{theorem}\label{theorem:1relational}
$\mathfrak{F}$ is mono-relational iff $\mathfrak{F}\models\blacksquare\neg p\leftrightarrow\neg\blacksquare p$ and $\mathfrak{F}\models\blacklozenge\neg p\leftrightarrow\neg\blacklozenge p$.
\end{theorem}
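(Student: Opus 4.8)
The plan is to prove both directions of the biconditional, with the left-to-right direction being essentially a computation and the right-to-left direction requiring a contrapositive argument that constructs a separating model.

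\textbf{The easy direction.} Suppose $\mathfrak{F}=\langle W,R^+,R^-\rangle$ is mono-relational, i.e.\ $R^+=R^-$. Fix any model $\mathfrak{M}$ on $\mathfrak{F}$ and any $w\in W$. For $\blacksquare$, I would compute $v_1(\blacksquare\neg p,w)=\inf_{w'}\{wR^+w'\rightarrow_\mathsf{G}v_1(\neg p,w')\}=\inf_{w'}\{wR^+w'\rightarrow_\mathsf{G}v_2(p,w')\}$, which, since $R^+=R^-$, equals $\inf_{w'}\{wR^-w'\rightarrow_\mathsf{G}v_2(p,w')\}=v_2(\blacksquare p,w)=v_1(\neg\blacksquare p,w)$; symmetrically $v_2(\blacksquare\neg p,w)=v_1(\blacksquare p,w)=v_2(\neg\blacksquare p,w)$. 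Hence $\blacksquare\neg p\leftrightarrow\neg\blacksquare p$ is strongly valid on $\mathfrak{F}$; the $\blacklozenge$ case is identical with $\rightarrow_\mathsf{G}$ replaced by $\wedge_\mathsf{G}$ and $\inf$ by $\sup$. (One should double-check that $\leftrightarrow$ is read as the conjunction of the two implications and that strong validity of $\chi\leftrightarrow\chi'$ amounts to $v_1(\chi,w)=v_1(\chi',w)$ and $v_2(\chi,w)=v_2(\chi',w)$ everywhere — this is routine from the definitions of $\rightarrow$, $\wedge$ and $\neg$.)

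\textbf{The hard direction.} I prove the contrapositive: if $\mathfrak{F}$ is not mono-relational, then one of the two formulas fails to be strongly valid. So suppose there are $w,u\in W$ with $wR^+u\neq wR^-u$; without loss of generality $wR^+u<wR^-u$ (the other case is symmetric, swapping the roles via $\neg$). The idea is to choose a valuation of $p$ that is ``concentrated'' at $u$ so that the mismatch between the two accessibility degrees at the pair $(w,u)$ is not washed out by other successors. For the $\blacksquare$ formula, I would try $v_1(p,u)=0$, $v_2(p,u)=1$ (so $v(\neg p,u)=(1,0)$), and at every other state $w'$ set $v_1(p,w')=1$, $v_2(p,w')=0$. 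Then at $w$: $v_1(\blacksquare p,w)=\inf_{w'}\{wR^+w'\rightarrow_\mathsf{G}v_1(p,w')\}$; every summand with $w'\neq u$ is $1$ (since $v_1(p,w')=1$), and the $w'=u$ summand is $wR^+u\rightarrow_\mathsf{G}0$, which is $0$ if $wR^+u>0$ and $1$ if $wR^+u=0$. So I will likely need a preliminary case split on whether $wR^+u=0$; if $wR^+u>0$ this gives $v_1(\blacksquare p,w)=0$ hence $v_1(\neg\blacksquare p,w)=1$, while $v_1(\blacksquare\neg p,w)=\inf_{w'}\{wR^+w'\rightarrow_\mathsf{G}v_1(\neg p,w')\}=1$ because $v_1(\neg p,u)=1$ and $v_1(\neg p,w')=v_2(p,w')=0$ — wait, that is $0$, not $1$. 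This shows the naive valuation needs adjusting: the point is to pick the values of $v_1(p,\cdot)$ and $v_2(p,\cdot)$ at $u$ and elsewhere so that exactly one of $\blacksquare\neg p$, $\neg\blacksquare p$ sees the $R^+$-edge and the other sees the $R^-$-edge, producing different infima. Concretely I expect to set $v_2(p,u)$ strictly between $wR^+u$ and $wR^-u$ and $v_2(p,w')$ large (e.g.\ $1$) for $w'\neq u$, so that $v_2(\blacksquare p,w)=\inf_{w'}\{wR^-w'\rightarrow_\mathsf{G}v_2(p,w')\}$ is governed by the $w'=u$ term $wR^-u\rightarrow_\mathsf{G}v_2(p,u)=v_2(p,u)$ (since $wR^-u>v_2(p,u)$), whereas $v_1(\blacksquare\neg p,w)=\inf_{w'}\{wR^+w'\rightarrow_\mathsf{G}v_2(p,w')\}$ has its $w'=u$ term equal to $wR^+u\rightarrow_\mathsf{G}v_2(p,u)=1$ (since $wR^+u<v_2(p,u)$) and all other terms $1$, so $v_1(\blacksquare\neg p,w)=1\neq v_2(p,u)=v_1(\neg\blacksquare p,w)$, refuting strong validity. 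If instead the mismatch satisfies $wR^-u<wR^+u$, I run the symmetric construction using the $\blacklozenge$ formula (or the same formula with the roles of $v_1,v_2$ and $R^+,R^-$ swapped); and the degenerate subcase where the smaller of the two degrees is $0$ is handled separately by choosing the value at $u$ to be any number in $(0,\max(wR^+u,wR^-u))$.

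\textbf{Main obstacle.} The genuine difficulty is bookkeeping in the hard direction: isolating the offending pair $(w,u)$, choosing the witness valuation so that the infimum (or supremum) is actually attained/determined by the $w'=u$ term and not spoiled by other successors, and organizing the case analysis ($wR^+u<wR^-u$ vs.\ $>$; smaller degree zero vs.\ positive) so that in each case exactly one of the four listed implications — one of the two directions of one of the two biconditionals — is shown to fail. The $\blacklozenge$ cases are mirror images of the $\blacksquare$ cases under $\rightarrow_\mathsf{G}\leftrightarrow\wedge_\mathsf{G}$, $\inf\leftrightarrow\sup$, $0\leftrightarrow 1$, so once the $\blacksquare$ argument is written cleanly the rest is symmetric and can be stated briefly.
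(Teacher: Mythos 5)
Your proposal is correct and follows essentially the same route as the paper: the easy direction is the same direct computation, and the hard direction is the same contrapositive construction of a countermodel concentrated on the offending pair $(w,u)$, with the value of $p$ elsewhere set to $1$ so that only the mismatched edge governs the infimum. The only cosmetic difference is that the paper sets $v(p,u)=(wR^+u,wR^-u)$ exactly (so that $\neg\blacksquare p$ evaluates to $(1,1)$ while $\blacksquare\neg p$ does not), whereas you take $v_2(p,u)$ strictly between the two degrees — both choices yield the same refutation.
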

\begin{proof}
Let $\mathfrak{F}$ be mono-relational and $R^+=R^-=R$. Now observe that
\begin{align*}
v_i(\blacksquare\neg p,w)&=\inf\limits_{w'\in W}\{wRw'\rightarrow_\mathsf{G}v_i(\neg p,w')\}\tag{$i\in\{1,2\}$}\\
&=\inf\limits_{w'\in W}\{wRw'\rightarrow_\mathsf{G}v_j(p,w')\}\tag{$i\neq j$}\\
&=v_j(\blacksquare p,w)\\
&=v_i(\neg\blacksquare p,w)
\end{align*}
For the converse, let $R^+\!\neq\!R^-$ and, in particular, $wR^+w'\!=\!x$ and $wR^-w'\!=\!y$. Assume w.l.o.g.\ that $x>y$. We set the valuation of $p$: $v(p,w')=(x,y)$ and for every $w''\neq w'$, we have $v(p,w'')=(1,1)$. It is clear that $v(\neg\blacksquare p,w)=(1,1)$. On the other hand, $v(\neg p,w')=(y,x)$, whence $v_1(\blacksquare\neg p)\neq1$.

The case of $\blacklozenge$ can be tackled in a dual manner.
\end{proof}

In the remainder of the paper, we will be concerned with $\fbinfoGsquare$ --- $\infoGsquare$ over finitely branching (both fuzzy and crisp) frames. This is for several reasons. First, in the context of formalising beliefs and reasoning with data acquired from sources, it is reasonable to assume that every source refers to only a finite number of other sources and that agents have access to a finite number of sources as well. This assumption is implicit in many classical epistemic and doxastic logics since they are often complete w.r.t.\ finitely branching models~\cite{FaginHalpernMosesVardi2003}, although cannot \emph{define} them. Second, in the finitely branching models, the values of modal formulas are \emph{witnessed}: if $v_i(\blacksquare\phi,w)=x<1$, then, $v_i(\phi,w')=x$ for some $w'$, and if $v_i(\blacklozenge\phi,w)=x$, then $wRw'=x$ or $v_i(\phi,w')=x$ for some $w'$. Intuitively, this means that the degree of $w$'s certainty in $\phi$ is purely based on the information acquired from sources and from its degree of trust in those. Finally, the restriction to finitely branching frames allows for the construction of a simple constraint tableaux calculus that can be used in establishing the complexity valuation.
\section{Tableaux calculus\label{sec:tableaux}}
In this section, we construct a sound and complete constraint tableaux system $\mathcal{T}\left(\fbinfoGsquare\right)$ for $\fbinfoGsquare$. The first constraint tableaux were proposed in~\cite{Haehnle1992,Haehnle1994,Haehnle1999} as a decision procedure for the \L{}ukasiewicz logic $\Luk$. A similar approach for the Rational Pawe\l{}ka logic was proposed in~\cite{diLascioGisolfi2005}. In~\cite{BilkovaFrittellaKozhemiachenko2021}, we constructed constraint tableaux for $\Luk^2$ and $\Gsquare$ --- the paraconsistent expansions of $\Luk$ and $\mathsf{G}$, and in~\cite{BilkovaFrittellaKozhemiachenko2022IJCAR} for modal expansions of the bi-G\"{o}del logic and $\Gsquare$.

Constraint tableaux are \emph{analytic} in the sense that their rules have subformula property. Moreover, they provide an easy way of the countermodel extraction from complete open branches. Furthermore, while the propositional connectives of $\Gsquare$ allow for the construction of an analytic proof system, e.g., a~display calculus extending that of $\mathsf{I}_4\mathsf{C}_4$\footnote{This logic was introduced several times: in~\cite{Wansing2008}, then in~\cite{Leitgeb2019}, and further studied in~\cite{OdintsovWansing2021}. It is, in fact, the propositional fragment of Moisil's modal logic~\cite{Moisil1942}. We are grateful to Heinrich Wansing who pointed this out to us.}~\cite{Wansing2008}, the modal ones are not dual to one another w.r.t.\ $\neg$ nor the G\"{o}del negation ${\sim}$. Thus, it is unlikely that an elegant (hyper-)sequent or display calculus for $\infoGsquare$ or $\fbinfoGsquare$ can be constructed.

The next definitions are adapted from~\cite{BilkovaFrittellaKozhemiachenko2022IJCAR}.
\begin{definition}\label{def:TfbinfoGsquare}
We fix a set of state-labels $\mathsf{W}$ and let $\lesssim\in\!\{<,\leqslant\}$ and $\gtrsim\in\!\{>,\geqslant\}$. Let further $w\!\in\!\mathsf{W}$, $\mathbf{x}\!\in\!\{1,2\}$, $\phi\!\in\!\infobimodalLsquare$, and $c\!\in\!\{0,1\}$. A~\emph{structure} is either $w\!:\!\mathbf{x}\!:\!\phi$, $c$, $w\mathsf{R}^+w'$, or $w\mathsf{R}^+w'$. We denote the set of structures with $\Str$. Structures of the form $w\!:\!\mathbf{x}\!:\!p$, $w\mathsf{R}^+w'$, and $w\mathsf{R}^-w'$ are called \emph{atomic} (denoted $\AStr$).

We define a \emph{constraint tableau} as a downward branching tree whose branches are sets containing constraints $\mathfrak{X}\lesssim\mathfrak{X'}$ ($\mathfrak{X},\mathfrak{X'}\in\Str$). Each branch can be extended by an application of a~rule\footnote{If $\mathfrak{X}\!<\!1,\mathfrak{X}\!<\!\mathfrak{X}'\!\in\!\mathcal{B}$ or $0\!<\!\mathfrak{X}',\mathfrak{X}\!<\!\mathfrak{X}'\!\in\!\mathcal{B}$, the rules are applied only to $\mathfrak{X}\!<\!\mathfrak{X}'$.} below (bars denote branching, $i,j\in\{1,2\}$, $i\neq j$).
\[\scriptsize{\begin{array}{cccc}
\neg_i\!\lesssim\!\dfrac{w\!:\!i\!:\!\neg\phi\!\lesssim\!\mathfrak{X}}{w\!:\!j\!:\!\phi\!\lesssim\!\mathfrak{X}}
&
\neg_i\!\gtrsim\!\dfrac{w\!:\!i\!:\!\neg\phi\!\gtrsim\!\mathfrak{X}}{w\!:\!j\!:\!\phi\!\gtrsim\!\mathfrak{X}}
&
\rightarrow_1\!\leqslant\!\dfrac{w\!:\!1\!:\!\phi\!\rightarrow\!\phi'\!\leqslant\!\mathfrak{X}}{\mathfrak{X}\!\geqslant\!{1}\left|\begin{matrix}\mathfrak{X}\!<\!{1}\\w\!:\!1\!:\!\phi'\!\leqslant\!\mathfrak{X}\\w\!:\!1\!:\!\phi\!>\!w\!:\!1\!:\!\phi'\end{matrix}\right.}
&
\rightarrow_2\!\geqslant\!\dfrac{w\!:\!2\!:\!\phi\rightarrow\phi'\!\geqslant\!\mathfrak{X}}{\mathfrak{X}\!\leqslant\!{0}\left|\begin{matrix}\mathfrak{X}\!>\!{0}\\w\!:\!2\!:\!\phi'\!\geqslant\!\mathfrak{X}\\w\!:\!2\!:\!\phi'\!>\!w\!:\!2\!:\!\phi\end{matrix}\right.}
\end{array}}\]

\[\scriptsize{\begin{array}{cccc}
\wedge_1\!\gtrsim\!\dfrac{w\!:\!1\!:\!\phi\!\wedge\!\phi'\!\gtrsim\!\mathfrak{X}}{\begin{matrix}w\!:\!1\!:\!\phi\!\gtrsim\!\mathfrak{X}\\w\!:\!1\!:\!\phi'\!\gtrsim\!\mathfrak{X}\end{matrix}}
&
\wedge_2\!\lesssim\!\dfrac{w\!:\!2\!:\!\phi\!\wedge\!\phi'\!\lesssim\!\mathfrak{X}}{\begin{matrix}w\!:\!2\!:\!\phi\!\lesssim\!\mathfrak{X}\\w\!:\!2\!:\!\phi'\!\lesssim\!\mathfrak{X}\end{matrix}}
&
\rightarrow_1\!<\!\dfrac{w\!:\!1\!:\!\phi\rightarrow\phi'\!<\!\mathfrak{X}}{\begin{matrix}w\!:\!1\!:\!\phi'\!<\!\mathfrak{X}\\w\!:\!1\!:\!\phi\!>\!w\!:\!1\!:\!\phi'\end{matrix}}
&
\rightarrow_2\!>\!\dfrac{w\!:\!2\!:\!\phi\rightarrow\phi'\!>\!\mathfrak{X}}{\begin{matrix}w\!:\!2\!:\!\phi'\!>\!\mathfrak{X}\\w\!:\!2\!:\!\phi'\!>\!w\!:\!2\!:\!\phi\end{matrix}}
\end{array}}\]

\[\scriptsize{\begin{array}{cc}
\wedge_1\!\lesssim\!\dfrac{w\!:\!1\!:\!\phi\wedge\phi'\!\lesssim\!\mathfrak{X}}{w\!:\!1\!:\!\phi\!\lesssim\!\mathfrak{X}\mid w\!:\!1\!:\!\phi'\!\lesssim\!\mathfrak{X}}
&\quad
\wedge_2\!\gtrsim\!\dfrac{w\!:\!2\!:\!\phi\wedge\phi'\!\gtrsim\!\mathfrak{X}}{w\!:\!2\!:\!\phi\!\gtrsim\!\mathfrak{X}\mid w\!:\!2\!:\!\phi'\!\gtrsim\!\mathfrak{X}}
\end{array}}\]

\[\scriptsize{\begin{array}{cc}
\rightarrow_1\!\gtrsim\!\dfrac{w\!:\!1\!:\!\phi\!\rightarrow\!\phi'\!\gtrsim\!\mathfrak{X}}{w\!:\!1\!:\!\phi\!\leqslant\!w\!:\!1\!:\!\phi'\mid w\!:\!1\!:\!\phi'\!\gtrsim\!\mathfrak{X}}&\rightarrow_2\!\lesssim\!\dfrac{w\!:\!2\!:\!\phi\rightarrow\phi'\!\lesssim\!\mathfrak{X}}{w\!:\!2\!:\!\phi'\!\leqslant\!w\!:\!2\!:\!\phi\mid w\!:\!2\!:\!\phi'\!\lesssim\!\mathfrak{X}}
\end{array}}\]

\[\scriptsize{\begin{array}{ccc}
\blacksquare_i\!\!\gtrsim\!\dfrac{w\!:\!i\!:\!\blacksquare\phi\!\gtrsim\!\mathfrak{X}}{w'\!:\!i\!:\!\phi\gtrsim\mathfrak{X}\mid w\mathsf{S}w'\!\leqslant\!w'\!:\!i\!:\!\phi}
&\quad
\blacksquare_i\!\!\leqslant\!\dfrac{w\!:\!i\!:\!\blacksquare\phi\!\leqslant\!\mathfrak{X}}{\mathfrak{X}\geqslant1\left|\begin{matrix}\mathfrak{X}\!<\!1\\w\mathsf{S}w''\!>\!w''\!:\!i\!:\!\phi\\w''\!:\!:\!i\!:\!\phi\leqslant\mathfrak{X}\end{matrix}\right.}
&\quad
\blacksquare_i\!\!<\!\dfrac{w\!:\!i\!:\!\blacksquare\phi\!<\!\mathfrak{X}}{\begin{matrix}w\mathsf{S}w''\!>\!w''\!:\!i\!:\!\phi\\w''\!:\!:\!i\!:\!\phi\!<\!\mathfrak{X}\end{matrix}}
\end{array}}\]

\[\scriptsize{\begin{array}{ccc}
\blacklozenge_i\!\!\gtrsim\!\dfrac{w\!:\!i\!:\!\blacklozenge\phi\!\gtrsim\!\mathfrak{X}}{\begin{matrix}w\mathsf{S}w''\!\gtrsim\!\mathfrak{X}\\w''\!:\!i\!:\!\phi\!\gtrsim\!\mathfrak{X}\end{matrix}}
&\quad
\blacklozenge_i\!\!\lesssim\!\dfrac{w\!:\!i\!:\!\blacklozenge\phi\!\lesssim\!\mathfrak{X}}{w'\!:\!i\!:\!\phi\lesssim\mathfrak{X}\mid w\mathsf{S}w'\!\lesssim\!\mathfrak{X}}&\quad
\left[\begin{matrix}
w''\text{ is fresh on the branch}\\
\text{if }i\!=\!1,\text{ then }\mathsf{S}\!=\!\mathsf{R}^+\\\text{if }i\!=\!2,\text{ then }\mathsf{S}\!=\!\mathsf{R}^-\\\text{in }\blacksquare_i\!\gtrsim,\blacklozenge_i\!\lesssim~w\mathsf{S}w'\text{ occurs on the branch}
\end{matrix}\right]
\end{array}}\]

A tableau's branch $\mathcal{B}$ is \emph{closed} iff one of the following conditions applies:
\begin{itemize}[noitemsep,topsep=2pt]
\item the transitive closure of $\mathcal{B}$ under $\lesssim$ contains $\mathfrak{X}<\mathfrak{X}$;
\item ${0}\geqslant{1}\in\mathcal{B}$, or $\mathfrak{X}>{1}\in\mathcal{B}$, or $\mathfrak{X}<{0}\in\mathcal{B}$.
\end{itemize}
A tableau is \emph{closed} iff all its branches are closed. We say that there is a \emph{tableau proof} of $\phi$ iff there are closed tableaux starting from $w\!:\!1\!:\!\phi<1$ and $w\!:\!2\!:\!\phi>0$.

An open branch $\mathcal{B}$ is \emph{complete} iff the following condition is met.
\begin{itemize}[noitemsep,topsep=2pt]
\item[$*$]\emph{If all premises of a rule occur on $\mathcal{B}$, then its one conclusion\footnote{Note that branching rules have \emph{two} conclusions.} occurs on~$\mathcal{B}$.}
\end{itemize}
\end{definition}

\begin{convention}\label{conv:TG2meaning}
The table below summarises the interpretations of entries.
\begin{center}
\begin{tabular}{c|c}
\textbf{entry}&\textbf{interpretation}\\\hline
$w\!:1\!:\!\phi\leqslant w'\!:2\!:\!\phi'$&$v_1(\phi,w)\leq v_2(\phi',w')$\\
$w\!:\!2\!:\!\phi\leqslant c$&$v_2(\phi,w)\leq c$ with $c\in\{0,1\}$\\
$w\mathsf{R}^-w'\leqslant w'\!:2\!:\!\phi$&$wR^-w'\leq v_2(\phi,w')$
\end{tabular}
\end{center}
\end{convention}

\begin{definition}[Branch realisation]\label{G2branchsatisfaction}
A model $\mathfrak{M}=\langle W,R^+,R^-,v_1,v_2\rangle$ with $W=\{w:w\text{ occurs on }\mathcal{B}\}$ \emph{realises a~branch $\mathcal{B}$} of a tableau iff
there is a function $\real:\Str\rightarrow[0,1]$ s.t.\ for every $\mathfrak{X},\mathfrak{Y},\mathfrak{Y}',\mathfrak{Z},\mathfrak{Z}'\in\Str$ with $\mathfrak{X}=w:\mathbf{x}:\phi$, $\mathfrak{Y}=w_i\mathsf{R}^+w_j$, and $\mathfrak{Y}'=w'_i\mathsf{R}^-w'_j$ the following holds ($\mathbf{x}\in\{1,2\}$, ${c}\in\{0,1\}$).
\begin{itemize}[noitemsep,topsep=2pt]
\item If $\mathfrak{Z}\lesssim\mathfrak{Z}'\in\mathcal{B}$, then $\real(\mathfrak{Z})\lesssim\real(\mathfrak{Z}')$.
\item $\real(\mathfrak{X})=v_\mathbf{x}(\phi,w)$, $\real(c)=c$, $\real(\mathfrak{Y})=w_iR^+w_j$, $\real(\mathfrak{Y}')=w'_iR^-w'_j$
\end{itemize}
\end{definition}

To facilitate the understanding of the rules, we give an example of a~failed tableau proof and extract a~coun\-ter-mo\-del. The proof goes as follows: first, we apply all the possible propositional rules, then the modal rules that introduce new states, and then those that use the states already on the branch. We repeat the process until all structures are decomposed into atomic ones.
\begin{center}
\begin{minipage}{.45\linewidth}
\scriptsize{
\begin{forest}
smullyan tableaux
[w_0\!:\!2\!:\!\neg\blacksquare p\!\rightarrow\!\blacksquare\neg p\!>\!0
[w_0\!:\!2\!:\!\neg\blacksquare p\!<\!w_0\!:\!2\!:\!\blacksquare\neg p
[0\!<\!w_0\!:\!2\!:\!\blacksquare\neg p
[w_0\!:\!1\!:\!\blacksquare p\!<\!w_0\!:\!2\!:\!\blacksquare\neg p
[w_0\mathsf{R}^+w_1\!>\!w_1\!:\!1\!:\!p
[w_1\!:\!1\!:\!p\!<\!w_0\!:\!2\!:\!\blacksquare\neg p
[w_1\!:\!2\!:\!\neg p\!>\!w_1\!:\!1\!:\!p[w_1\!:\!1\!:\!p\!>\!w_1\!:\!1\!:\!p,closed]]
[w_0\mathsf{R}^-w_1\leqslant w_1\!:\!2\!:\!\neg p[w_0\mathsf{R}^-w_1\leqslant w_1\!:\!1\!:\!p[\frownie]]]
]]]]]]
\end{forest}}
\end{minipage}
\hfill
\begin{minipage}{.45\linewidth}
\[\xymatrix{w_0\ar@/^1pc/[rr]|{R^+=1}\ar@/_1pc/[rr]|{R^-=\frac{1}{2}}&&w_1:p=\left(\frac{1}{2},0\right)}\]
\end{minipage}
\end{center}
We can now extract a model from the complete open branch marked with $\frownie$ s.t.\ $v_2(\neg\blacksquare p\!\rightarrow\!\blacksquare\neg p,w_0)>0$. We use $w$'s that occur thereon as the carrier and assign the values of variables and relations so that they correspond to $\lesssim$.

\begin{theorem}[$\TfbinfoGsquare$ completeness]\label{theorem:TinfoG2completeness}
$\phi$ is \emph{strongly} valid in $\infoGsquare$ iff there is a tableau proof of $\phi$.
\end{theorem}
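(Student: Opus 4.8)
The proof of Theorem~\ref{theorem:TinfoG2completeness} splits, as usual for tableaux calculi, into a soundness direction and a completeness direction, and I would address them separately.

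\textbf{Soundness (if there is a tableau proof, then $\phi$ is strongly valid).} I would argue the contrapositive: if $\phi$ is not strongly valid, then $v_1(\phi,w)<1$ or $v_2(\phi,w)>0$ in some pointed model, and in either case I claim the corresponding tableau (started from $w\!:\!1\!:\!\phi<1$, resp.\ $w\!:\!2\!:\!\phi>0$) cannot be closed. The key lemma here is that every rule \emph{preserves realisability}: if a branch $\mathcal{B}$ is realised by a model $\mathfrak{M}$ (in the sense of Definition~\ref{G2branchsatisfaction}), then at least one of the branches produced by applying a rule to $\mathcal{B}$ is still realised by $\mathfrak{M}$ (possibly after choosing a witness world for the existential modal rules $\blacksquare_i\!\leqslant$, $\blacksquare_i\!<$, $\blacklozenge_i\!\gtrsim$ --- this is exactly where finite branching matters, since the infimum/supremum in the semantics is attained). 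One then checks, rule by rule, using the bi-G\"{o}del truth tables of Definition~\ref{def:bi-G_algebra} and the modal clauses of Definition~\ref{def:semantics}, that the stated conclusions are forced; e.g.\ for $\rightarrow_1\!\leqslant$ one uses that $v_1(\phi,w)\rightarrow_\mathsf{G}v_1(\phi',w)\leqslant\mathfrak{X}<1$ forces $v_1(\phi,w)>v_1(\phi',w)$ and $v_1(\phi',w)\leqslant\mathfrak{X}$. Since a realised branch is by definition consistent with $\lesssim$ and never contains $\mathfrak{X}<\mathfrak{X}$ or the arithmetic impossibilities, a realised branch is open; hence a realisable tableau is not closed.

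\textbf{Completeness (if $\phi$ is strongly valid, then there is a tableau proof).} Again by contraposition: suppose neither the tableau from $w\!:\!1\!:\!\phi<1$ nor the one from $w\!:\!2\!:\!\phi>0$ closes; I must produce a pointed model falsifying strong validity of $\phi$ (i.e.\ witnessing $v_1(\phi,w)<1$ or $v_2(\phi,w)>0$). Running a fair saturation strategy --- the one sketched before the theorem: propositional rules first, then state-creating modal rules, then state-reusing modal rules, iterated --- yields, in one of the two tableaux, a complete open branch $\mathcal{B}$. From $\mathcal{B}$ I build the canonical model: worlds are the state-labels occurring on $\mathcal{B}$, and the real-valued data (the values $v_i(p,w)$ and $wR^\pm w'$) are read off by topologically sorting the atomic constraints of $\mathcal{B}$ under $\lesssim$ and assigning rationals in $[0,1]$ respecting all the strict and non-strict inequalities --- this is possible precisely because the branch is open, so its $\lesssim$-closure is a strict-partial-order-compatible preorder with no $\mathfrak{X}<\mathfrak{X}$ and no clash with $0,1$. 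The central \emph{Truth Lemma} is then proved by induction on $\phi$: for every structure $w\!:\!i\!:\!\psi\lesssim\mathfrak{X}$ (resp.\ $\gtrsim$) on $\mathcal{B}$, the constructed valuation satisfies $v_i(\psi,w)\lesssim\real(\mathfrak{X})$; completeness of the branch guarantees that the relevant decomposition rule has fired, so the inductive hypothesis applies to the conclusions. The modal cases use that state-creating rules supplied the needed witness worlds and that state-reusing rules ($\blacksquare_i\!\gtrsim$, $\blacklozenge_i\!\lesssim$) were applied to every already-present accessible world, so infima/suprema come out right on the finite frame. Applying this to the root structure gives $v_1(\phi,w)<1$ or $v_2(\phi,w)>0$, contradicting strong validity.

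\textbf{Main obstacle.} The delicate point is the interplay between the two valuations $v_1,v_2$ and the \emph{two} accessibility relations in the Truth Lemma: a branch may carry constraints about $w\!:\!1\!:\!\psi$ and $w\!:\!2\!:\!\psi$ that must be realised \emph{simultaneously} by a single assignment of rationals, and the negation rules $\neg_i$ link $v_1$ and $v_2$ across subformulas, so the induction cannot be run independently on the two supports. One has to check that the constraint set remains jointly satisfiable --- i.e.\ that the $\lesssim$-graph on \emph{all} atomic structures (mixing $R^+$, $R^-$, both indices, and the constants) is acyclic --- and that the state-reusing modal rules have genuinely been applied to \emph{all} $w\mathsf{S}w'$ on the branch for the correct $\mathsf{S}\in\{\mathsf{R}^+,\mathsf{R}^-\}$ dictated by the index $i$, so that the computed infimum/supremum over the finitely many successors matches the tableau data. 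Termination/finiteness of the saturated branch (needed to know the model is a legitimate finitely branching frame and that the induction bottoms out) also has to be argued, via a subformula-and-label measure; this is routine but must be stated.
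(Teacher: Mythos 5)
Your proposal is correct and follows essentially the same route as the paper: soundness by checking rule-by-rule that realisability is preserved (with finite branching turning infima/suprema into attained minima/maxima for the witness-introducing modal rules), and completeness by extracting from a complete open branch a model whose rational values respect the $\lesssim$-order on atomic structures and then running an induction showing that satisfied conclusions force satisfied premises — exactly the paper's construction (its explicit class-counting assignment is just a concrete version of your topological sort). The only cosmetic imprecision is that for the contrapositive of completeness you should assume that \emph{at least one} of the two starting tableaux fails to close (the negation of ``tableau proof exists''), which is all your argument actually uses.
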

\begin{proof}
The proof is an easy adaptation of~\cite[Theorem~3]{BilkovaFrittellaKozhemiachenko2022IJCAR}, whence we provide only a sketch thereof. The skipped steps can be seen in Section~\ref{subsec:completenessproof}.

To prove soundness, we need to show that if the premise of the rule is realised, then so is at least one of its conclusions. This can be done by a routine check of the rules. Note that since we work with finitely branching frames, infima and suprema from Definition~\ref{def:semantics} become maxima and minima. Since closed branches are not realisable, the result follows.

To prove completeness, we show that every complete open branch $\mathcal{B}$ is realisable. We show how to construct a realising model from the branch. First, we set $W=\{w:w\text{ occurs in }\mathcal{B}\}$. Denote the set of atomic structures appearing on $\mathcal{B}$ with $\AStr(\mathcal{B})$ and let $\mathcal{B}^+$ be the transitive closure of $\mathcal{B}$ under $\lesssim$. Now, we assign values to them. For $i\in\{1,2\}$, if $w\!:\!i\!:\!p\geqslant1\in\mathcal{B}$, we set $v_i(p,w)=1$. If $w\!:\!i\!:\!p\leqslant0\in\mathcal{B}$, we set $v_i(p,w)=0$. If $w\mathsf{S}w'<\mathfrak{X}\notin\mathcal{B}^+$, we set $w\mathsf{S}w'=1$. If $w\!:\!i\!:\!p$ or $w\mathsf{S}w'$ does not occur on $\mathcal{B}$, we set $v_i(p,w)=0$ and $w\mathsf{S}w'=0$.

For each $\str\in\AStr$, we now set
\[[\str]\!=\!\left\{\str'\left| \; \begin{matrix}\str\leqslant\str'\in\mathcal{B}^+\text{ and }\str<\str
\notin\mathcal{B}^+\\
\text{or}\\
\str\geqslant\str'\in\mathcal{B}^+\text{ and }\str>\str'\notin\mathcal{B}^+
\end{matrix}\right.\right\}\]
Denote the number of $[\str]$'s with $\#^\mathsf{str}$. Since the only possible loop in $\mathcal{B}^+$ is $\str\leqslant\str'\leqslant\ldots\leqslant\str$ where all elements belong to $[\str]$, it is clear that $\#^\mathsf{str}\leq2\cdot|\AStr(\mathcal{B})|\cdot|W|$. Put $[\str]\prec[\str']$ iff there are $\str_i\in[\str]$ and $\str_j\in[\str']$ s.t.\ $\str_i<\str_j\in\mathcal{B}^+$. We now set the valuation of these structures as follows:
\begin{align*}
\str=\dfrac{|\{[\str']\mid[\str']\prec[\str]\}|}{\#^\mathsf{str}}
\end{align*}
It is clear that constraints containing only atomic structures and constants are now satisfied. To show that all other constraints are satisfied, we prove that if at least one conclusion of the rule is satisfied, then so is the premise. Again, the proof is a slight modification of~\cite[Theorem~3]{BilkovaFrittellaKozhemiachenko2022IJCAR} and can be done by considering the cases of rules (the details are in Section~\ref{subsec:completenessproof}).
\end{proof}
\section{Complexity\label{sec:complexity}}
In this section, we use the tableaux to provide the upper bound on the size of falsifying (satisfying) models and prove that satisfiability and validity\footnote{Satisfiability and falsifiability (non-validity) are reducible to each other: $\phi$ is satisfiable iff ${\sim\sim}(\phi\coimplies\mathbf{0})$ is falsifiable; $\phi$ is falsifiable iff ${\sim\sim}(\mathbf{1}\coimplies\phi)$ is satisfiable.} of $\fbinfoGsquare$ are $\pspace$ complete.

The following statement follows immediately from Theorem~\ref{theorem:TinfoG2completeness}.
\begin{corollary}\label{cor:FMP}
Let $\phi\in\infobimodalLsquare$ be \emph{not $\fbinfoGsquare$ valid}, and let $k$ be the number of modalities in it. Then there is a model $\mathfrak{M}$ of the size $\leq k^{k+1}$ and depth $\leq k$ and $w\in\mathfrak{M}$ s.t.\ $v_1(\phi,w)\neq1$ or $v_2(\phi,w)\neq0$.
\end{corollary}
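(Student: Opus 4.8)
The plan is to read the countermodel off a complete open branch of a failed tableau and then bound its size and depth by elementary counting, the point being that the fresh-world rules need to be applied only as often as strictly necessary to witness the values of the modal subformulas of $\phi$. Since $\phi$ is not $\fbinfoGsquare$ valid, Theorem~\ref{theorem:TinfoG2completeness} gives that $\phi$ has no tableau proof, so at least one of the tableaux starting from $w\!:\!1\!:\!\phi<1$ or $w\!:\!2\!:\!\phi>0$ has a complete open branch $\mathcal{B}$. Running the model-construction of the completeness proof on $\mathcal{B}$ yields a model $\mathfrak{M}$ realising $\mathcal{B}$, and by Definition~\ref{G2branchsatisfaction} its root $w$ satisfies $\real(w\!:\!1\!:\!\phi)=v_1(\phi,w)<1$ or $\real(w\!:\!2\!:\!\phi)=v_2(\phi,w)>0$, as required. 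What is left is to argue that $\mathfrak{M}$ can be taken to have the stated size and depth.

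For the depth, I would use the subformula property of $\TfbinfoGsquare$: every structure attached to a freshly introduced $w''$ by $\blacksquare_i\!\leqslant$, $\blacksquare_i\!<$, or $\blacklozenge_i\!\gtrsim$ applied to some $w\!:\!i\!:\!\blacksquare\psi\,(\ldots)$ or $w\!:\!i\!:\!\blacklozenge\psi\,(\ldots)$ concerns a \emph{subformula of $\psi$}, hence a formula with strictly fewer modal operators than the one that triggered the introduction. Therefore along any $\mathsf{S}$-chain of successive introductions the number of modalities strictly decreases, so such a chain has length at most $k$, i.e.\ $\mathfrak{M}$ has depth $\le k$. For the width, I would note that in a finitely branching frame the infima and suprema of Definition~\ref{def:semantics} are attained, so for each modal subformula occurrence $\blacksquare\psi$ (resp.\ $\blacklozenge\psi$) of $\phi$, a single successor of a world $w$ suffices to realise the value of that subformula at $w$ (for both the $v_1$- and the $v_2$-support), all remaining successors being only constrained from above or from below. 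Since $\phi$ contains exactly $k$ modal-operator occurrences, each world of $\mathfrak{M}$ needs at most $k$ successors. Combining the two bounds, a crude count gives at most $1+k+k^{2}+\dots+k^{k}\le k^{k+1}$ worlds (for $k\ge2$; for $k\le1$ a one- or two-world model obviously works).

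The main obstacle is the width estimate, since an open branch may carry several upper/lower comparisons involving the same modal subformula, each of which, read literally off the rules, would spawn its own fresh successor. The task is to show that the branch can be realised while keeping, per world, only one witnessing successor per modal subformula occurrence — i.e.\ that the other comparisons are automatically satisfied once the minimising/maximising successor is in place, the rest of the accessible worlds acting merely as bounds. This is precisely the economical countermodel extraction carried out for $\KbiG$ in~\cite{BilkovaFrittellaKozhemiachenko2022IJCAR}, and it transfers verbatim to $\fbinfoGsquare$ because the $v_1$- and $v_2$-support conditions for $\blacksquare$ and $\blacklozenge$ are built from the same Gödel operations over the same two relations $R^+,R^-$, acting on the two coordinates independently.
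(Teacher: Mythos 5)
Your proposal follows essentially the same route as the paper's own argument: extract the countermodel from a complete open branch via Theorem~\ref{theorem:TinfoG2completeness}, bound the depth by the modal nesting of $\phi$, and bound the branching per world by the number of modalities, giving the $k^{k+1}$ size estimate. Your extra discussion of the width issue (one witnessing successor per modal subformula occurrence, deferring to the $\KbiG$ construction) is a more careful spelling-out of a point the paper simply asserts, so the approaches coincide.
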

\begin{proof}
In Section~\ref{subsec:FMPproof}.
\end{proof}

We can now prove the $\pspace$ completeness result. The proof of $\pspace$ membership adapts the method from~\cite{BilkovaFrittellaKozhemiachenko2022IJCAR} and is inspired by the proof of the $\pspace$ membership of $\mathbf{K}$ from~\cite{BlackburndeRijkeVenema2010}. For the hardness part, we reduce the validity in $\mathbf{K}$ to $v_1$ and $v_2$ validities. We provide a sketch of the proof (the skipped steps are given in Section~\ref{subsec:PSPACEproof}).
\begin{theorem}\label{theorem:infoG2PSPACE}
$\fbinfoGsquare$ validity and satisfiability are $\pspace$ complete.
\end{theorem}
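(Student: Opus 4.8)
The plan is to establish the two bounds separately: $\pspace$-membership via a depth-first, space-efficient search through the tableau calculus $\TfbinfoGsquare$, and $\pspace$-hardness by a reduction from $\mathbf{K}$-validity using Theorem~\ref{theorem:blackcounterparts}. Throughout I would use Corollary~\ref{cor:FMP}: if $\phi$ is not $\fbinfoGsquare$-valid then there is a countermodel of depth $\leq k$ and size $\leq k^{k+1}$, where $k$ is the number of modalities in $\phi$. The crucial observation is that even though such a model may be exponentially large, it has only \emph{polynomial depth}, so a tree of states can be explored branch-by-branch, keeping only one root-to-leaf path of states in memory at any time.

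For $\pspace$-membership, I would describe an alternating (equivalently, $\pspace$) procedure in the style of the standard argument for $\mathbf{K}$ from~\cite{BlackburndeRijkeVenema2010}, adapted as in~\cite{BilkovaFrittellaKozhemiachenko2022IJCAR}. To test whether $\phi$ is falsifiable, the algorithm tries to build an open branch of a tableau starting from $w_0\!:\!1\!:\!\phi<1$ (or, for $v_2$-falsifiability, $w_0\!:\!2\!:\!\phi>0$). At each state $w$ it first saturates under all \emph{propositional} rules and the \emph{local} modal rules that only mention $w$; this involves only structures whose formulas are subformulas of $\phi$ labelled by $w$, together with the (polynomially many) order constraints among them, so the local bookkeeping at a single state uses polynomial space. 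The key point is how to handle the constraint-solving: a branch is closed iff its transitive closure under $\lesssim$ contains some $\mathfrak{X}<\mathfrak{X}$ or a violation of the $0,1$ bounds; checking consistency of a finite set of such linear order constraints (a "chain" satisfiability problem over $[0,1]$) is decidable in polynomial time, and — importantly — the concrete values assigned in the completeness proof (the $|\{[\str']\mid[\str']\prec[\str]\}|/\#^{\str}$ assignment) only ever involve denominators bounded by $2\cdot|\AStr(\mathcal{B})|\cdot|W|$, which is polynomial per state. When the algorithm must apply a rule that introduces a fresh successor $w''$ (the rules $\blacksquare_i\!\leqslant$, $\blacksquare_i\!<$, $\blacklozenge_i\!\gtrsim$), it \emph{recurses}: it records only the constraint linking $w$ to $w''$ through $\mathsf{R}^+$ or $\mathsf{R}^-$ and the demanded value of the relevant subformula at $w''$, then continues the search at $w''$, discarding the sibling successors. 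Since the recursion depth equals the modal depth, bounded by $k$, and each stack frame is polynomial, the whole procedure runs in polynomial space; by Savitch/closure of $\pspace$ under complement and nondeterminism, $\fbinfoGsquare$-(non)validity is in $\pspace$, hence so is satisfiability by the reduction in the footnote.

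For $\pspace$-hardness, I would use that $\mathbf{K}$-validity is $\pspace$-hard~\cite{BlackburndeRijkeVenema2010} and reduce it to $\fbinfoGsquare$-validity. Given a modal formula $\phi$ over $\{\wedge,\vee,\rightarrow,\coimplies,\Box,\lozenge\}$ (in particular any classical modal formula, after translating $\vee$ and $\neg$ classically), Theorem~\ref{theorem:blackcounterparts} gives $\mathfrak{F}\models_{\KbiG}\phi$ iff $\mathfrak{F}^+\models^+\phi^{+\bullet}$ for every frame $\mathfrak{F}$ and every $R^+$-counterpart; since classical $\mathbf{K}$-validity coincides with $\KbiG$-validity over crisp frames for the classical fragment (this is the standard embedding of $\mathbf{K}$ into $\KG$, cf.~the discussion preceding Definition~\ref{def:bi-G_algebra}), we get $\phi$ is $\mathbf{K}$-valid iff $\phi^{+\bullet}$ is $v_1$-valid in $\fbinfoGsquare$, and the map $\phi\mapsto\phi^{+\bullet}$ is clearly computable in logarithmic space. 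The analogous statement with $\phi^{-\bullet}$ and $\models^-$ handles $v_2$-validity, and strong validity then reduces as well. This yields $\pspace$-hardness of $\fbinfoGsquare$-validity; hardness of satisfiability follows from the mutual reduction in the footnote.

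The main obstacle I anticipate is the membership direction, specifically making precise that the constraint tableau — whose branches a priori carry an arbitrary finite set of inequalities between arbitrarily-valued structures — can be explored within polynomial space. The delicate points are: (i) arguing that it suffices to consider successor states one at a time (which rests on the fact that the three successor-introducing rules each demand the existence of \emph{one} witness, so the witnesses for different modal subformulas at $w$ can be generated in separate recursive calls rather than simultaneously), and (ii) bounding the numerical precision needed, i.e. showing the relevant constraint systems have solutions over rationals with polynomially-bounded denominators, so that each $\real(\mathfrak X)$ fits in polynomial space. Both are handled in~\cite{BilkovaFrittellaKozhemiachenko2022IJCAR} for the modal bi-Gödel case, and the non-normal modalities $\blacksquare,\blacklozenge$ here do not change the picture because their rules still only ever introduce a single fresh successor and relate it to the current state; so I expect this to go through essentially verbatim, with the details deferred to Section~\ref{subsec:PSPACEproof}.
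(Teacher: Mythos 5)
Your $\pspace$-membership half is essentially the paper's argument (depth-first, one successor at a time, recursion depth bounded by modal depth, delete explored subtrees), but your hardness half contains a genuine error. You claim that classical $\mathbf{K}$-validity coincides with $\KbiG$-validity over crisp frames on the $\{\wedge,\vee,\rightarrow,\Box,\lozenge\}$-fragment, and hence that $\phi$ is $\mathbf{K}$-valid iff $\phi^{+\bullet}$ is $v_1$-valid. This is false already at the propositional level: Peirce's law $((p\rightarrow q)\rightarrow p)\rightarrow p$ is $\mathbf{K}$-valid, contains no modalities (so it is its own $+\bullet$-image), yet it is not $v_1$-valid --- take $v_1(p,w)=\tfrac{1}{2}$, $v_1(q,w)=0$, which gives the formula value $\tfrac{1}{2}$. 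So the direction ``$\mathbf{K}$-valid $\Rightarrow$ $v_1$-valid'' fails and $\phi\mapsto\phi^{+\bullet}$ is not a reduction. The paper instead prefixes \emph{every subformula} with the double G\"{o}del negation ${\sim}{\sim}$ (the translation $\phi^\triangledown$, following~\cite{CaicedoMetcalfeRodriguezRogger2017}), which collapses all values to $\{0,1\}$ and only then matches classical $\mathbf{K}$. Moreover, your remark that ``the analogous statement with $\phi^{-\bullet}$ and $\models^-$ handles $v_2$-validity'' is unsupported: Theorem~\ref{theorem:blackcounterparts} speaks only about $\models^+$ (both $\phi^{+\bullet}$ and $\phi^{-\bullet}$ are evaluated under $v_1$), so it says nothing about $v_2$-validity. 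Since the validity notion of $\fbinfoGsquare$ is \emph{strong} validity, the paper needs a genuinely different, dualising translation $\phi^\partial$ (with $p^\partial=\mathbf{1}\coimplies(\mathbf{1}\coimplies p)$, $\wedge/\vee$ swapped, $(\chi\rightarrow\psi)^\partial=\psi^\partial\coimplies\chi^\partial$) to reduce $\mathbf{K}$-validity to $v_2$-validity; without it your hardness argument does not cover the theorem's validity notion.

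On membership, your plan follows the paper's, but the point you defer is exactly the load-bearing step. In the constraint calculus, branches compare values of structures sitting at \emph{different} states and closure is a condition on the transitive closure of the whole branch, so deleting an explored subtree while ``recording only the constraint linking $w$ to $w''$'' is not obviously sound. The paper's mechanism is to discretise first: by the completeness construction every open branch is realised by values in $\mathsf{V}=\{0,\frac{1}{\#^\str},\dots,\frac{\#^\str-1}{\#^\str},1\}$, so the rules are rewritten for labelled structures $w\!:\!i\!:\!\psi=\mathsf{v}$ and $w\mathsf{S}w'=\mathsf{v}$, eliminating all cross-state comparisons and making closure a local check ($w\!:\!i\!:\!\psi=\mathsf{v}$, $w\!:\!i\!:\!\psi=\mathsf{v}'$, $\mathsf{v}\neq\mathsf{v}'$); only then is the branch-by-branch, delete-and-reuse exploration in $O(|\phi|^2)$ space justified. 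Also note $\#^\str\leq 2\cdot|\AStr(\mathcal{B})|\cdot|W|$ is \emph{not} polynomial (the model, hence $|W|$, may be exponential); what saves space is that such values are representable in polynomially many bits.
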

\begin{proof}
For the membership, observe from the proof of Theorem~\ref{theorem:TinfoG2completeness} that $\phi$ is satisfiable (falsifiable) on $\mathfrak{M}=\langle W,R^+,R^-,v_1,v_2\rangle$ iff all variables, $w\mathsf{R}^+w'$'s, and $w\mathsf{R}^-w'$'s have values from $\mathsf{V}=\left\{0,\frac{1}{\#^\str},\ldots,\frac{\#^\str-1}{\#^\str},1\right\}$ under which $\phi$ is satisfied (falsified).

Since $\#^\str$ is bounded from above, we can now replace constraints with labelled formulas and relational structures of the form $w\!:\!i\!:\!\phi\!=\!\mathsf{v}$ or $w\mathsf{S}w'\!=\!\mathsf{v}$ ($\mathsf{v}\in\mathsf{V}$) avoiding comparisons of values of formulas in different states. We close the branch if it contains $w\!:\!i\!:\!\psi\!=\!\mathsf{v}$ and $w\!:\!i\!:\!\psi\!=\!\mathsf{v}'$ for $\mathsf{v}\!\neq\!\mathsf{v}'$.

Now we replace the rules from Definition~\ref{def:TfbinfoGsquare} with new ones that work with labelled structures. Below, we give as an example the rules\footnote{For a value $\mathsf{v}>0$ of $\blacklozenge\phi$ at $w$, we add a new state that witnesses $\mathsf{v}$, and for a~state on the branch, we guess a~value smaller than $\mathsf{v}$. Other modal rules can be rewritten similarly.} that replace $\blacklozenge_i\!\!\lesssim$.
\begin{center}
\scriptsize{\begin{align*}
\dfrac{w\!:\!i\!:\!\blacklozenge\phi\!=\!\frac{r}{\#^\str}}{\left.\begin{matrix}w\mathsf{S}w'\!=\!1\\w\!:\!i\!:\!\phi\!=\!\frac{r}{\#^\str}\end{matrix}\right|\left.\begin{matrix}w\mathsf{S}w'\!=\!\frac{r}{\#^\str}\\w\!:\!i\!:\!\phi\!=\!1\end{matrix}\right|\ldots\left|\begin{matrix}w\mathsf{S}w'\!=\!\frac{r}{\#^\str}\\w\!:\!i\!:\!\phi\!=\!\frac{r}{\#^\str}\end{matrix}\right.}
&&
\dfrac{w\!:\!i\!:\!\blacklozenge\phi\!=\!\frac{r}{\#^\str};(w\mathsf{S}w'\text{ occurs on the branch})}{w'\!:\!i\!:\!\phi\!=\!\frac{r-1}{\#^\str}\mid w\mathsf{S}w'\!=\!\frac{r-1}{\#^\str}\mid\ldots\mid w'\!:\!i\!:\!\phi\!=\!0}\end{align*}}
\end{center}
Observe that once all rules are rewritten in this manner, we will not need to compare values of formulas \emph{in different states}.

We then proceed as follows: first, we apply the propositional rules, then \emph{one} modal rule requiring a~new state (e.g., $w_0\!:\!i\!:\!\blacklozenge\phi\!=\!\frac{r}{\#^\str}$), then the rules that use that state guessing the tableau branch when needed. By repeating this process, we are building \emph{the model branch by branch}. The model has the depth bounded by the length of $\phi$ and we work with modal formulas one by one, whence we need to store subformulas of $\phi$ and $w\mathsf{S}w'$'s with their values $O(|\phi|)$ times, so, we need only $O(|\phi|^2)$ space. Once the branch is constructed, we can delete the entries of the tableau and repeat the process with the next formula at $w_0$ that would introduce a new state.

For hardness, we reduce the $\mathbf{K}$ validity of $\{\mathbf{0},\wedge,\vee,\rightarrow,\Box,\lozenge\}$ formulas to $v_1$-validity and $v_2$-validity in $\fbinfoGsquare$. For the reduction to $v_1$-validity, we use the idea from~\cite[Theorem~21]{CaicedoMetcalfeRodriguezRogger2017}. Namely, given $\phi$, we denote with $\phi^\triangledown$ the formula whose every subformula is prenexed with ${\sim\sim}$ and where $\Box$ and $\lozenge$ are replaced with $\blacksquare$ and $\blacklozenge$. Since semantics for the G\"{o}del modal logic and for the positive support ($v_1$ valuations, Definition~\ref{def:semantics}) coincide, the result follows.

For the reduction to $v_2$-validity, we take $\phi$ and inductively define $\phi^\partial$:
\begin{align*}
p^\partial&=\mathbf{1}\coimplies(\mathbf{1}\coimplies p)\\
(\chi\circ\psi)^\partial&=\chi^\partial\bullet\psi^\partial\tag{$\circ,\bullet\in\{\wedge,\vee\}$, $\circ\neq\bullet$}\\
(\chi\rightarrow\psi)^\partial&=\psi^\partial\coimplies\chi^\partial\\
(\Box\chi)^\partial&=\blacksquare(\chi^\partial)\\
(\lozenge\chi)^\partial&=\blacklozenge(\chi^\partial)
\end{align*}
One can check by induction that for every \emph{crisp} finitely branching $\mathfrak{F}$ and every \emph{classical} valuation $\mathbf{v}$ thereon, it holds that $\mathfrak{F},\mathbf{v},w\vDash\phi$ iff $v_2(\mathbf{1}\coimplies\phi^\partial,w)=0$ and $\mathfrak{F},\mathbf{v},w\nvDash\phi$ iff $v_2(\mathbf{1}\coimplies\phi^\partial,w)=1$ provided that $v_2=\mathbf{v}$.

For the converse, let $\mathfrak{M}=\langle W,R^+,R^-,v_1,v_2\rangle$ be a $\fbinfoGsquare$ model. Let $\mathfrak{M}^!=\langle W,R^!,v^!\rangle$ be s.t.\ $wR^!w'$ iff $wR^-w'=1$ and $w\in v^!(p)$ iff $v_2(p,w)=1$. Again, it is easy to verify that for every $\mathfrak{M}$, $v_2(\phi^\partial,w)=1$ iff $\mathfrak{M}^!,w\vDash\phi$.

It follows that $\phi$ is $\mathbf{K}$-valid iff $\mathbf{1}\coimplies\phi^\partial$ is $v_2$-valid.
\end{proof}
\section{Conclusions and future work\label{sec:conclusion}}
We presented a modal expansion $\infoGsquare$ of $\Gsquare$ with non-normal modalities and provided it with Kripke semantics on bi-relational frames with two valuations. We established its connection with the bi-G\"{o}del modal logic $\KbiG$ presented in~\cite{BilkovaFrittellaKozhemiachenko2022IJCAR,BilkovaFrittellaKozhemiachenko2022IGPLarxiv} and obtained decidability and complexity results considering $\infoGsquare$ over finitely branching frames.

The next steps are as follows. First of all, we plan to explore the decidability of the full $\infoGsquare$ logic. We conjecture that it is also $\pspace$ complete. However, the standard way of proving $\pspace$ completeness of G\"{o}del modal logics described in~\cite{CaicedoMetcalfeRodriguezRogger2013,CaicedoMetcalfeRodriguezRogger2017} and used in~\cite{BilkovaFrittellaKozhemiachenko2022IGPLarxiv} to establish $\pspace$ completeness of $\KbiG$ may not be straightforwardly applicable here as the reduction from $\infoGsquare$ validity to $\KbiG$ validity can be hard to obtain for it follows immediately from Theorem~\ref{theorem:1relational} that $\infoGsquare$ lacks negation normal forms.

Second, it is interesting to design a complete Hilbert-style axiomatisation of $\infoGsquare$ and study its correspondence theory w.r.t.\ \emph{strong validity}. This can be non-trivial since $\blacksquare(p\rightarrow q)\rightarrow(\blacksquare p\rightarrow\blacksquare q)$ and $\blacklozenge(p\vee q)\rightarrow\blacklozenge p\vee\blacklozenge q$ are not $\infoGsquare$ valid, even though, it is easy to check that the following rules are sound.
\begin{align*}
\dfrac{\phi\rightarrow\chi}{\blacksquare\phi\rightarrow\blacksquare\chi}&&\dfrac{\phi\rightarrow\chi}{\blacklozenge\phi\rightarrow\blacklozenge\chi}
\end{align*}

The other direction of future research is to study global versions of $\blacksquare$ and $\blacklozenge$ as well as description logics based on them. Description G\"{o}del logics are well-known and studied~\cite{BobilloDelgadoGomez-RamiroStraccia2009,BobilloDelgadoGomez-RamiroStraccia2012} and allow for the representation of uncertain data that cannot be represented in the classical ontologies. Furthermore, they are the only decidable family of fuzzy description logics which contrasts them to e.g., \L{}ukasiewicz description (and global) logics which are not even axiomatisable~\cite{Vidal2021}. On the other hand, there are known description logics over $\BD$ (cf., e.g.~\cite{MaHitzlerLin2007}), and thus it makes sense to combine the two approaches.
\bibliographystyle{splncs04}
\bibliography{reference}
\newpage
\appendix
\section{Proofs\label{sec:longproofs}}
\subsection{Proof of Theorem~\ref{theorem:TinfoG2completeness}\label{subsec:completenessproof}}
We fill in the gaps in the sketch. First, we prove the soundness result. Since propositional rules are exactly the same as in $\mathcal{T}\left(\fbKGsquare\right)$~\cite{BilkovaFrittellaKozhemiachenko2022IJCAR}, we consider only the most interesting cases of modal rules. We tackle $\blacksquare_1\!\!\gtrsim$ and $\blacklozenge_2\!\!\gtrsim$ (cf.\ Definition~\ref{def:TfbinfoGsquare}) and show that in each case, if $\mathfrak{M}=\langle W,R^+,R^-,v_1,v_2\rangle$ realises the premise of the rule, it also realises one of its conclusions.

We begin with $\blacksquare_1\!\!\gtrsim$, assume w.l.o.g.\ that $\mathfrak{X}=w''\!:\!2\!:\!\psi$, and let $\mathfrak{M}$ realise $w\!:\!1\!:\!\blacksquare\phi\geqslant w''\!:\!2\!:\!\psi$. Now, since $R^+$ and $R^-$ are finitely branching, we have that $\min\limits_{w'\in W}\{w\mathsf{R}^+w'\rightarrow_\mathsf{G}v_1(\phi,w')\}\geq v_2(\psi,w)$, whence at each $w'\in W$ s.t.\ $wR^+w'>0$\footnote{Recall that if $u\mathsf{S}u'\notin\mathcal{B}$, we set $u\mathsf{S}u'=0$.}, either $v_1(\phi,w')\geq v_2(\psi,w'')$ or $w\mathsf{R}^+w'\geq v_2(\psi,w'')$. Thus, at least one conclusion of the rule is satisfied.

For $\blacklozenge_2\!\!\gtrsim$ we proceed similarly. Let $\mathfrak{M}$ realise $w\!:\!1\!:\!\blacklozenge\phi\geqslant w''\!:\!2\!:\!\psi$. Again, by the finite branching, we have that $\min\limits_{w'\in W}\{wR^+w'\wedge_\mathsf{G}v_1(\phi,w')\}$. Hence, there is some fresh $w'\in W$ s.t.\ $wR^+w',v_1(\phi,w')\geq v_2(\psi,w'')$. Thus, the conclusion of the rule is satisfied, as desired.

For completeness, we reason by contraposition. We show by induction on formulas that every complete open branch is realised. The case of atomic constraints holds by the construction of the realising model (recall the proof of Theorem~\ref{theorem:TinfoG2completeness}). We show that other constraints are satisfied. For that, we prove that if at least one conclusion of the rule is satisfied, then so is the premise. The propositional cases are straightforward and can be tackled in the same manner as in~\cite[Theorem~2]{BilkovaFrittellaKozhemiachenko2021}. We consider only the cases of $\blacklozenge_2\!\gtrsim$ and $\blacksquare_1\!\!\gtrsim$ and assume w.l.o.g.\ that $\mathfrak{X}=w''\!:\!2\!:\!\psi$.

For $\blacksquare_1\!\!\gtrsim$, assume that for every $w'$ s.t.\ $w\mathsf{R}^+w'$ is on the branch, either $w'\!:\!1\!:\!\phi\geqslant w''\!:\!2\!:\!\psi$ or $w\mathsf{R}^+w'\leqslant w'\!:\!1\!:\!\phi$ is realisable. Thus, by the inductive hypothesis, for every $w'\in R^+(w)$, it holds that $v_1(\phi,w')\geq v_2(\psi,w'')$ or $wR^+w'\leq v_1(\phi,w')$. Hence, $v_1(\blacksquare\phi,w)\geq v_2(\psi,w'')$ and $w\!:\!1\!:\!\blacksquare\phi\geqslant w''\!:\!2\!:\!\psi$ is realised.

For $\blacklozenge_2\!\gtrsim$, let $w\mathsf{R}^-w''\geqslant w''\!:\!2\!:\!\psi$ and $w'\!:\!1\!:\!\phi\geqslant w''\!:\!2\!:\!\psi$ be realised for some $w''\in R(w)$. By the induction hypothesis, we have that $w\mathsf{R}^-w'',v_2(\phi,w')\geq v_2(\psi,w'')$, whence, $v_2(\blacklozenge\phi,w)\geq v_2(\psi,w'')$ and thus, $w\!:\!2\!:\!\blacklozenge\phi\geqslant w''\!:\!2\!:\!\psi$.

Other rules can be considered similarly.
\subsection{Proof of Corollary~\ref{cor:FMP}\label{subsec:FMPproof}}
By theorem~\ref{theorem:TinfoG2completeness}, if $\phi$ is \emph{not $\fbinfoGsquare$ valid}, we can build a~falsifying model using tableaux. It is also clear from the rules in Definition~\ref{def:TfbinfoGsquare} that the depth of the constructed model is bounded from above by the maximal number of nested modalities in $\phi$. The width of the model is bounded by the maximal number of modalities on the same level of nesting.
\subsection{Proof of Theorem~\ref{theorem:infoG2PSPACE}\label{subsec:PSPACEproof}}
We provide the decision algorithm that utilises the rewritten rules. The algorithm is essentially the same as in~\cite{BilkovaFrittellaKozhemiachenko2022IJCAR}. Note also that it is possible to use the original calculus as a decision procedure, although it is not optimal.

Let us show how to build a satisfying model for $\phi$ using polynomial space. We begin with $w_0\!:\!1\!:\phi\!=\!1$ (the algorithm for $w_0\!:\!1\!:\phi\!=\!0$ is the same) and start applying propositional rules (first, those that do not require branching). If we implement a branching rule, we pick one branch and work only with it: either until the branch is closed, in which case we pick another one; until no more rules are applicable (then, the model is constructed); or until we need to apply a modal rule to proceed. At this stage, we need to store only the subformulas of $\phi$ with labels denoting their value at~$w_0$.

Now we guess a~modal formula (say, $w_0\!:\!2\!:\!\blacklozenge\chi\!=\!\frac{1}{\#^\str}$) whose decomposition requires an introduction of a~new state ($w_1$) and apply this rule. Then we apply all modal rules whose implementation requires that $w_0\mathsf{R}^-w_1$ occur on the branch (again, if those require branching, we guess only one branch) and start from the beginning with the propositional rules. If we reach a contradiction, the branch is closed. Again, the only new entries to store are subformulas of $\phi$ (now, with fewer modalities), their values at $w_1$, and a~relational term $w_0\mathsf{R}^-w_1$ with its value. Since the depth of the model is $O(|\phi|)$ and since we work with modal formulas one by one, we need to store subformulas of $\phi$ with their values $O(|\phi|)$ times, so, we need only $O(|\phi|^2)$ space.

Finally, if no rule is applicable and there is no contradiction, we mark $w_0\!:\!2\!:\!\blacklozenge\chi\!=\!\frac{1}{\#^\str}$ as ‘safe’. Now we \emph{delete all entries of the tableau below it} and pick another unmarked modal formula that requires an introduction of a new state. Dealing with these one by one allows us to construct the model branch by branch. But since the length of each branch of the model is bounded by $O(|\phi|)$ and since we delete \emph{branches of the model} once they are shown to contain no contradictions, we need only polynomial space.
\end{document}